\documentclass{amsart}
\usepackage{amsmath,amssymb,amsthm,amscd,graphicx,float,bm,mathrsfs,mathtools,microtype}
\graphicspath{{figures/}}
\allowdisplaybreaks[4]
\usepackage[pdfpagemode=UseOutlines,bookmarks=true,bookmarksopen=true,
bookmarksopenlevel=0,bookmarksnumbered=true,hypertexnames=false,
colorlinks,linkcolor=blue,citecolor=blue,urlcolor=blue,
pdfstartview=FitV,unicode,breaklinks=true]{hyperref}
\newtheorem{theorem}{Theorem}[section]
\newtheorem{lemma}[theorem]{Lemma}
\newtheorem{proposition}[theorem]{Proposition}
\newtheorem{corollary}[theorem]{Corollary}
\newtheorem{definition}[theorem]{Definition}
\newtheorem{remark}[theorem]{Remark}
\newtheorem{notation}[theorem]{Notation}

\begin{document}
\title[Volume of area-bounded $\mathrm{PGL}_3$-Hitchin moduli space]
{Volume of area-bounded $\mathrm{PGL}_3$-Hitchin moduli space}
\author{Qiongling Li}
\address{Chern Institute of Mathematics and LPMC, Nankai University,
Tianjin 300071, China}
\email{qiongling.li@nankai.edu.cn}
\author{Beibei Liu}
\address{Department of Mathematics, The Ohio State University, Columbus, OH, USA}
\email{liu.11302@osu.edu}
\author{Weixu Su}
\address{School of Mathematics, Sun Yat-sen University, Guangzhou, China}
\email{suwx9@mail.sysu.edu.cn}
\author{Zhe Sun}
\address{School of Mathematical Sciences, University of Science and Technology
of China, 96 Jinzhai Road, 230026 Hefei, Anhui, China}
\email{sunz@ustc.edu.cn}
\keywords{$\mathrm{PGL}_3$-Hitchin moduli space, unmarked convex
$\mathbb{RP}^2$ structures, Goldman symplectic volume, area.}
\subjclass[2010]{Primary 57M50, 58D27, 32G15}
\date{}
\begin{abstract}
For the $\mathrm{PGL}_3$-Hitchin moduli space of unmarked convex
$\mathbb{RP}^2$ structures on the surface $S_{g,m}$ with fixed
boundary data $\mathbf L$ when $m>0$, we investigate the subset whose canonical Hilbert--Busemann area is bounded above by $t$. We show that the Goldman symplectic volume of this subset is at most $C_{g,m}(t+\lambda^2)^{8g-8+3m}$, where $\lambda$ is the largest total-root boundary length for $m>0$ and is set to $0$ for $m=0$.
As a consequence, We obtain the explicit upper bound for the $e^{-s\cdot\mathrm{Area}}$-weighted volume.
\end{abstract}
\maketitle
\enlargethispage{5pt}

\section{Introduction}
Throughout, $S=S_{g,m}$ denotes a connected oriented surface
of genus $g$ with $m$ boundary components and
$\chi(S)=2-2g-m<0$.
In \cite{Mir07a}, Mirzakhani showed that the volume of the moduli space
$\mathcal M_{g,m}(\mathbf L)$ of Riemann surfaces with fixed boundary lengths
$\mathbf L$ with respect to the Weil--Petersson symplectic form is a polynomial
of $\mathbf L$. She obtained this result by showing a recursive formula where
one side consists of the volume of $\mathcal M_{g,m}(\mathbf L)$, while the
other side consists of the volumes of the moduli spaces of Riemann surfaces
obtained by cutting out pairs of pants from $S_{g,m}$ (see \cite{Wri20} for
a survey). In \cite{GS26}, Goncharov and the fourth author find a recursive formula for the exponential volume of the moduli space of ideal hyperbolic structures on a crowned surface by cutting off triangles or trouser legs.

Higher Teichm\"uller theory studies representations of the
fundamental group $\pi_1(S_{g,m})$ with more flexibility, where the isometry
group $\operatorname{PSL}(2,\mathbb R)$ of the holonomy representation of the
hyperbolic surface is replaced by a semisimple Lie group (see \cite{W18} for
a survey). We are looking for an analog of Mirzakhani's result for the
special connected component of the $\operatorname{PGL}(n,\mathbb R)$-representation
variety $\operatorname{Hom}(\pi_1(S_{g,m}),\operatorname{PGL}(n,\mathbb R))/
\operatorname{PGL}(n,\mathbb R)$ modulo the mapping class group. The existence
of such a geometric quantity was predicted by Labourie and McShane in
\cite[page 284]{LM09}, and they indicate that the total volume is infinite
for $n\ge3$, and this result is proved in \cite{CJ23, AKKJ26}. In this paper, we work on the existence of finite geometric
quantities for $n=3$ by imposing a bound on the canonical area or by
integrating an exponentially decaying function of that area.

A \emph{convex $\mathbb{RP}^2$ surface} is a quotient $\Omega/\Gamma$ where
$\Omega\subset\mathbb{RP}^2$ is convex and
$\Gamma\subset\operatorname{PGL}(3,\mathbb R)$ is discrete and acts properly
on $\Omega$. It was initially studied by Kuiper \cite{Ku53,Ku54},
Benz\'ecri \cite{B60}, Kac--Vinberg \cite{KV67} and many others. On the other
hand, a special connected component, called the Hitchin component
$\operatorname{Hit}_n(S_{g,0})$, of the $\operatorname{PGL}(n,\mathbb R)$-representation
variety was found by Hitchin in \cite{Hit92} through a special section of the
Hitchin fibration. In \cite{CG93,G90}, Choi and Goldman proved that the moduli
space of marked convex $\mathbb{RP}^2$ structures on $S_{g,0}$ is exactly
$\operatorname{Hit}_3(S_{g,0})$. In \cite{FG06}, Fock and Goncharov introduced
the notion of positivity to study a special part $\operatorname{Pos}_n(S_{g,m})$
of the representation variety. By \cite[Theorem 1.15]{FG06},
$\operatorname{Pos}_n(S_{g,0})=\operatorname{Hit}_n(S_{g,0})$. For $n=3$,
positivity can be understood in terms of strict convexity.

Let $\operatorname{Pos}_3(S_{g,m})(\mathbf L)$ be the positive representation
variety with fixed boundary simple-root lengths $\mathbf L$. For $m>0$,
Marquis proved in \cite{Mar10} that the moduli space of marked cusped strictly
convex $\mathbb{RP}^2$ structures on $S_{g,m}$ is exactly
$\operatorname{Pos}_3(S_{g,m})(\mathbf0)$. For nonzero boundary data, related
results are established in \cite{Mar12} with respect to the minimal convex
domain. For $m>0$ and $\mathbf L\in\mathbb R_{>0}^{2m}$,
Labourie--McShane's doubling
construction \cite[Section 9]{LM09} provides a canonical closed-surface
positive representation. We are mainly interested in
\[
 \mathcal H(S_{g,m})(\mathbf L)
 :=\operatorname{Pos}_3(S_{g,m})(\mathbf L)/\operatorname{Mod}(S_{g,m}),
\]
the moduli space of unmarked positive convex $\mathbb{RP}^2$ structures with
fixed boundary simple-root lengths. Here $\operatorname{Mod}(S_{g,m})$
consists of orientation-preserving mapping classes preserving each
boundary component setwise, with isotopies subject to the same condition.
When $m=0$, $\mathbf L$ is the empty tuple, and the fixed-boundary
notation denotes the corresponding closed-surface spaces and volumes.

The (Atiyah--Bott--)Goldman symplectic form \cite{AB83,G84} is a natural
mapping-class-group invariant symplectic form on each fixed-boundary leaf.
We use the invariant bilinear form $B(U,V)=\operatorname{tr}(UV)$ to fix its
normalization. For the canonical Hilbert--Busemann area $A(\rho)$ of
Definition~\ref{definition:carea}, write
\begin{gather*}
 \mathcal{AH}^{t}(S_{g,m})(\mathbf L)
 :=\{[\rho]\in\mathcal H(S_{g,m})(\mathbf L): A(\rho)\le t\},\\
 W_{g,m}(t;\mathbf L):=\operatorname{Vol}
      \bigl(\mathcal{AH}^{t}(S_{g,m})(\mathbf L)\bigr).
\end{gather*}
This is a natural subset to consider because the canonical area on the
$3$-Fuchsian locus is the hyperbolic area, hence is constant for fixed
topological type. The main theorem is the following. 

\begin{theorem}[Theorem \ref{thm:main}]\label{thm:intro-main}
Let $\mathbf L=(\ell_1(\alpha),\ell_2(\alpha))_{\alpha\subset \partial S_{g,m}}\in\mathbb R_{>0}^{2m}$. Set
\[
 d=8g-8+3m,\qquad
 \lambda=
 \begin{cases}
 \displaystyle\max_{\alpha\subset\partial S_{g,m}}
      \bigl(\ell_1(\alpha)+\ell_2(\alpha)\bigr),&m>0,\\
 0,&m=0.
 \end{cases}
\]
There is a constant $C_{g,m}>0$, depending only on the topological type and
the stated normalization, such that for every $t\ge0$,
\[
 W_{g,m}(t;\mathbf L)\le C_{g,m}(t+\lambda^2)^d.
\]
\end{theorem}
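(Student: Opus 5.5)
We plan to follow a Mirzakhani-style strategy: work in global Darboux-type coordinates for the Goldman form, namely the Fock--Goncharov / Bonahon--Dreyer (or Zhang--Goldman) coordinates adapted to a pants decomposition. We cut $S_{g,m}$ along $3g-3+m$ interior curves. Each pants curve $\gamma$ carries two length coordinates $(\ell_1(\gamma),\ell_2(\gamma))$ and two twist-type coordinates $(\tau_1(\gamma),\tau_2(\gamma))$. Each pair of pants carries two internal parameters $(s_P,\theta_P)$ (Goldman's internal coordinates). In these coordinates the Goldman form is, up to a normalization constant, a sum of $d\ell\wedge d\tau$ terms plus $ds_P\wedge d\theta_P$ terms. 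This matches the dimension count $4(3g-3+m)+2(2g-2+m)=2d$ with $d=8g-8+3m$.

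The volume of a subset of the moduli space is bounded by the volume of a fundamental domain for $\operatorname{Mod}(S_{g,m})$ intersected with $\{A\le t\}$. To control this we plan three steps.

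\textbf{Step 1 (short pants decomposition).} We first prove a Bers-type lemma. Every convex $\mathbb{RP}^2$ structure with $A(\rho)\le t$ and boundary data $\mathbf L$ admits a pants decomposition whose curves satisfy $\ell_1(\gamma)+\ell_2(\gamma)\le C(\sqrt{t}+\lambda)$, with $C$ depending only on $(g,m)$. The natural route is to compare the Hilbert metric with its area: a coarea/isoperimetric argument in the Hilbert--Busemann geometry gives a systole bound, $\mathrm{sys}^2\lesssim \mathrm{Area}$. The Hilbert length of a geodesic is comparable to $\ell_1+\ell_2$. Iterating on the complement, as in Bers' proof, yields the bound, with the boundary lengths contributing the $\lambda$ term. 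Since there are only finitely many pants decomposition types up to $\operatorname{Mod}$, we may fix one topological decomposition and sum.

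\textbf{Step 2 (twist bounds).} Modulo Dehn twists, each twist coordinate can be taken in a fundamental interval of length comparable to $\ell(\gamma)$. The second twist direction is not generated by Dehn twists, so we must bound it directly, and here we use area again. A large "eruption/bulge" twist (the $\tau_2$ parameter) forces the Hilbert area of the collar, or of the adjacent pants, to grow. We aim to prove $|\tau_2(\gamma)|\lesssim \sqrt t+\lambda$ after normalization.

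\textbf{Step 3 (internal parameters, the main obstacle).} Similarly, we must show that $|s_P|,|\theta_P|\lesssim \sqrt{t}+\lambda$, or at least that their product region has area $\lesssim t+\lambda^2$. For this we plan a lower bound of the form $A(P)\gtrsim |s_P|+\ldots$ in the canonical area. Applying these estimates to the fundamental-domain region then confines it to a box. Each of the $d$ canonical conjugate pairs contributes a factor $\lesssim(\sqrt t+\lambda)^2\le 2(t+\lambda^2)$, giving $W_{g,m}(t;\mathbf L)\le C_{g,m}(t+\lambda^2)^d$.

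The hard part is Steps 2--3: quantitatively bounding the canonical area from below in terms of the non-Fuchsian parameters (internal pants parameters and bulging twists), uniformly in the lengths. We would first try Benoist--Hulin-type comparisons of the Hilbert area with Blaschke/affine-sphere area, via Loftin's cubic differential $q$. Area growth is roughly $\int |q|^{2/3}$, which would give control of the cubic differential and hence of all deformation parameters in a coercive, homogeneous-of-degree-appropriate way.
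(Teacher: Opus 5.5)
Your overall architecture matches the paper's: a Bers-type short pants decomposition, finitely many $\operatorname{Mod}$-orbits of pants decompositions, Darboux coordinates, a Dehn-twist fundamental domain, and area bounds on the non-Fuchsian parameters. However, both estimates you leave open are stated in a form that either fails or cannot come from the mechanism you suggest.

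\textbf{Step 2.} The uniform bound $|\tau_2(\gamma)|\lesssim\sqrt t+\lambda$ should not be expected for short pants curves. The area lower bound for a bulge grows only like $\ell(\gamma)$ times the bulge; heuristically, bulging a thin collar by $r$ costs area of order $\ell r$. So for short $\gamma$, bulges of size comparable to $t/\ell$ are not excluded. What can actually be proved (Proposition~\ref{prop:closed-bulge}) goes as follows. Normalize $\gamma^\pm$ at $(0,0)$ and $\infty$. Compare the Busemann density of $\Omega_\rho$ with that of the outer triangle cut out by the tangent lines at $\gamma^\pm$ and at a vertex on the opposite side. Integrate over a fundamental domain of the trouser leg. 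This gives $A(\rho)\ge\frac{\pi}{12}\ell(|\beta_\gamma|-\ell)$, i.e.\ $|\beta_\gamma|\le\ell+\frac{12t}{\pi\ell}$, which blows up as $\ell\to0$.

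The argument survives only because the Dehn twist along $\gamma$ translates $(\theta_{1,\gamma},\theta_{2,\gamma})$ by $(\ell_1,\ell_2)$. In the twist-invariant bulge $\beta_\gamma$ and $r_\gamma=(\theta_{1,\gamma}+\theta_{2,\gamma})/\ell$ one has $|d\theta_{1,\gamma}\,d\theta_{2,\gamma}|=\ell\,|d\beta_\gamma\,dr_\gamma|$ with $r_\gamma\in[0,1)$. The factor $\ell$ cancels the $1/\ell$, so each pants curve contributes $2\ell^2+24t/\pi$, which is then integrated over $\ell_1+\ell_2\le B$. Your box count (``each conjugate pair contributes $\lesssim(\sqrt t+\lambda)^2$'') must be replaced by this integrated estimate.

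\textbf{Step 3.} A lower bound of the form $A(P)\gtrsim|s_P|$ is too weak. It only gives $|s_P|\lesssim t$, so the pants factor becomes of order $t^2$ and the total degree exceeds $d$. You need a \emph{quadratic} lower bound, and the missing tool is Adeboye--Cooper's estimate $\operatorname{Area}(\Delta)\ge\frac{\pi}{24}\bigl(\pi^2+(\log T(\Delta))^2\bigr)$ for ideal triangles. It gives $|\log T(\Delta)|\le\sqrt{24t/\pi}$, hence immediately $|Y_P|\le\sqrt{48a_P/\pi}$.

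The other coordinate $X_P$ is a sum of edge-function differences $\sigma_2(e)-\sigma_1(e)$, which are not triple ratios, so area does not control them directly. The paper flips the diagonal of each fundamental quadrilateral of $P$. The flip formula exhibits $D_1(e)\,T'$ as a weighted average of $1$ and $e^{r_e}$, where $r_e$ is a fixed combination of the boundary lengths of $P$. Hence $|\sigma_1(e)-\sigma_2(e)+u-v|\le|r_e|$, where $u,v$ are the log triple ratios of the two flipped triangles and are again area-controlled. This yields $|X_P|\le\sqrt{3a_P/\pi}+\Lambda_P/6$.

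The cubic-differential route you propose has no available quantitative link between $\int|q|^{2/3}$ and Fock--Goncharov or Goldman internal parameters, so it does not close this step.

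\textbf{Step 1.} A Bers-style iteration on the complement does not transfer directly to non-hyperbolic metrics. The paper instead passes to the Blaschke metric, which is uniformly comparable to the Hilbert metric, and invokes Balacheff--Parlier--Sabourau.
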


As a corollary, we show the following exponentially weighted volume is finite.
\begin{corollary}[Theorem \ref{thm:area-weighted-volume}]\label{cor:intro-weight}
For every $s>0$,
\[
 Z_{g,m}(s;\mathbf L):=
 \int_{\mathcal H(S_{g,m})(\mathbf L)}e^{-sA(\rho)}\,d\operatorname{Vol}(\rho)
 <\infty.
\]
More precisely,
\[
 Z_{g,m}(s;\mathbf L)
 =s\int_0^\infty e^{-st}W_{g,m}(t;\mathbf L)\,dt
 \le C_{g,m}\sum_{k=0}^{d}\binom dk
                \lambda^{2(d-k)}\frac{k!}{s^k}.
\]
\end{corollary}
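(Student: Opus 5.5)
The corollary follows from Theorem~\ref{thm:intro-main} by a layer-cake argument. Write $F(t)=W_{g,m}(t;\mathbf L)$. This is the pushforward distribution function of the area: $F(t)=\operatorname{Vol}\{A\le t\}$, which is nondecreasing, right-continuous, and finite for every $t$ by the theorem. We need $A$ to be a measurable (indeed continuous) function on $\mathcal H(S_{g,m})(\mathbf L)$ that is bounded below by $0$. I would take this from Definition~\ref{definition:carea}; the canonical area is continuous on $\operatorname{Pos}_3$ and is $\operatorname{Mod}$-invariant.

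The first step is the identity. For $A\ge0$ we have
\[
e^{-sA(\rho)}=\int_{A(\rho)}^\infty s e^{-st}\,dt=\int_0^\infty s e^{-st}\,\mathbf 1_{\{A(\rho)\le t\}}\,dt .
\]
Integrate this over $\mathcal H(S_{g,m})(\mathbf L)$ against $d\operatorname{Vol}$. Tonelli applies because everything is nonnegative, and it gives
\[
Z_{g,m}(s;\mathbf L)=s\int_0^\infty e^{-st}F(t)\,dt .
\]
No finiteness is needed at this stage; the identity holds in $[0,\infty]$.

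The second step is the bound. Insert $F(t)\le C_{g,m}(t+\lambda^2)^d$ and expand binomially:
\[
(t+\lambda^2)^d=\sum_{k=0}^d\binom dk\lambda^{2(d-k)}t^k .
\]
Each term is evaluated with $s\int_0^\infty e^{-st}t^k\,dt=k!/s^k$. This yields exactly
\[
Z_{g,m}(s;\mathbf L)\le C_{g,m}\sum_{k=0}^{d}\binom dk \lambda^{2(d-k)}\frac{k!}{s^k}<\infty .
\]
When $m=0$ we have $\lambda=0$, so only the $k=d$ term survives and the bound reduces to $C_{g,0}\,d!/s^d$.

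There is no real obstacle beyond the theorem itself. The only point to check is that $A\ge 0$, so that the lower limit of the $t$-integral is $0$ and the identity holds without boundary terms. This should hold because $A$ is defined as a Hilbert--Busemann area, which is a positive measure of a surface.
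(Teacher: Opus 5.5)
Your argument is correct and is essentially the paper's proof: integrate the layer-cake identity $e^{-sa}=s\int_a^\infty e^{-st}\,dt$ against $d\operatorname{Vol}$, then insert the polynomial bound of Theorem~\ref{thm:intro-main}, expand binomially, and use $s\int_0^\infty e^{-su}u^k\,du=k!/s^k$. The one difference is that the paper also uses the lower bound $A(\rho)\ge t_0=p\pi^3/12$, coming from \eqref{eq:triangle-area-exact-lower}, to start the $t$-integral at $t_0$; this yields the sharper bound $C_{g,m}e^{-st_0}\sum_{k}\binom dk(t_0+\lambda^2)^{d-k}k!/s^k$ of Theorem~\ref{thm:area-weighted-volume}, whereas integrating from $0$ as you do gives exactly the weaker bound stated in the corollary.
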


Wolpert's formula \cite{Wol82,Wol83} expresses the Weil--Petersson
symplectic form in Fenchel--Nielsen coordinates. We use its higher-rank
counterpart, the generalized Darboux coordinates of
Sun--Wienhard--Zhang and Sun--Zhang \cite{SWZ20,SZ25}. Then we need to control these parameters with repsect to the area. In \cite{Wol21}, Wolpert provides an example where the area of a local quadrilateral is bounded, but the bulging parameter diverges. However, in our case the area bound controls the two internal coordinates of each pair of pants and the bulging coordinate of each gluing pants curve. The bound for the buglging parameter of each pants curve and the bugling parameter of each pair of pants initially in \cite{S20} is corrected into the bounds in Propositions \ref{prop:closed-bulge} and \ref{prop:internal-pants-area} in this paper. Then one could use McShane identity \cite{HS23} to estimate as in \cite{S20}. We adopt a simpler approach as follows. Lemma \ref{lem:bers}, which basically follows \cite{BPS12}, implies the existence of Bers' constant in $\mathcal{AH}^{t}(S_{g,m})(\mathbf L)$. As a consequence, we can cover $\mathcal{AH}^{t}(S_{g,m})(\mathbf L)$ by a collection of pants curves lengths bounded and twists bounded subsets in Theorem \ref{thm:main}, where the collection runs over the mapping class group orbit of pants decompositions. Integrating each subset proves the main theorem. 
The weighted-volume formula then follows immediately.

\section{Convex \texorpdfstring{$\mathbb{RP}^2$}{RP2} structures on surfaces}
\label{sec:convex-projective}

We recall some preliminaries for investigating the moduli space of unmarked
convex $\mathbb{RP}^2$ structures on the surface, including the convex
$\mathbb{RP}^2$ structures on surfaces, the positive representations and the
projective invariants that are used to parameterize the moduli space.

\subsection{Convex \texorpdfstring{$\mathbb{RP}^2$}{RP2} structure}

\begin{definition}\label{def:rp2-surface}
The $\mathbb{RP}^2$ surface $\Sigma$ is a quotient $\Omega/\Gamma$
diffeomorphic to a smooth surface $S$, where $\Omega$ is a convex domain in
$\mathbb{RP}^2$ and $\Gamma$ is a discrete subgroup of
$\operatorname{PGL}(3,\mathbb R)$ acting properly on $\Omega$.
\end{definition}
Two $\mathbb{RP}^2$ surfaces $\Omega/\Gamma$ and $\Omega'/\Gamma'$ are
equivalent if there is a projective transformation $g\in\operatorname{PGL}(3,\mathbb R)$
such that $(\Omega',\Gamma')=(g\Omega,g\Gamma g^{-1})$.

The $\mathbb{RP}^2$ surface $\Sigma$ is equivalent to a pair $(\rho,f)$:
\begin{itemize}
\item $\rho\colon\pi_1(S)\to\operatorname{PGL}(3,\mathbb R)$ is the holonomy
representation of $\Sigma$ where $\rho(\pi_1(S))=\Gamma$;
\item $f\colon\widetilde S\to\mathbb{RP}^2$ is the developing map where
$f(\widetilde S)=\Omega$.
\end{itemize}
The shape of the domain $\Omega$ is an important feature for the
$\mathbb{RP}^2$ surface.

\begin{definition}\label{def:convexity}
\begin{enumerate}
\item A subset $\Omega$ in $\mathbb{RP}^2$ is convex if the intersection of
$\Omega$ with every line is connected.
\item The convex subset $\Omega$ is properly convex if $\Omega$ is bounded
in $\mathbb R^2\cong\mathbb{RP}^2\setminus\mathbb{RP}^1$ for some hyperplane
$\mathbb{RP}^1$.
\item The properly convex subset $\Omega$ is strictly convex if the boundary
$\partial\Omega$ contains no line segments.
\end{enumerate}
\end{definition}

\begin{definition}\label{def:rp2-structure}
A marked convex $\mathbb{RP}^2$ structure on a smooth surface $S$ is
defined to be a diffeomorphism $h\colon S\to\Sigma$ where $\Sigma$ is a
convex $\mathbb{RP}^2$ surface.
\end{definition}
We say that two marked convex $\mathbb{RP}^2$ structures $(h,\Sigma)$ and
$(h',\Sigma')$ are equivalent if and only if there is a projective equivalence
$g\colon\Sigma\to\Sigma'$ such that $g\circ h$ is isotopic to $h'$.

The unmarked convex $\mathbb{RP}^2$ structure on the smooth surface $S$ is
the pure mapping class group orbit of a marked convex $\mathbb{RP}^2$
structure.

We say that two unmarked convex $\mathbb{RP}^2$ structures $[h,\Sigma]$ and
$[h',\Sigma']$ are equivalent if and only if there is a projective equivalence
$g\colon\Sigma\to\Sigma'$ and an orientation preserving diffeomorphism
$u$ of $S$ which preserves each boundary component setwise such that
$g\circ h\circ u$ is isotopic
to $h'$.

There is a natural Finsler metric on every properly convex domain.
\begin{definition}[Hilbert metric]\label{def:hilbert}
Given a bounded convex domain $\Omega\subset\mathbb R^2\subset\mathbb{RP}^2$, for
any two distinct points $a,b\in\Omega$, let $p_a$ and $p_b$ be the points
at which the straight line $ab$ intersects the boundary of $\Omega$, where
$p_a$ is closer to $a$ and $p_b$ is closer to $b$. Let $|\cdot|$ be the
Euclidean length in $\mathbb R^2$. The Hilbert distance is defined to be
\[
 d_\Omega(a,b)=\frac12\log\left(
 \frac{|a-p_b|}{|b-p_b|}\cdot\frac{|b-p_a|}{|a-p_a|}\right).
\]
The metric defined by the Hilbert distance is called the Hilbert metric.
The Hilbert distance is invariant under projective transformations. Thus
for a convex $\mathbb{RP}^2$ surface $\Omega/\Gamma$, the Hilbert metric on
$\Omega$ descends to the Hilbert metric on $\Omega/\Gamma$.
\end{definition}
In the special case when $\Omega$ is an ellipse for the convex
$\mathbb{RP}^2$ surface $\Omega/\Gamma$, the Hilbert metric on $\Omega$ is
the usual hyperbolic metric on $\Omega$ with respect to the Klein model.

\begin{definition}[Area]\label{def:busemann-area}
For any $(x,v)\in T\Omega$ where $x$ belongs to the convex domain $\Omega$
and $v$ is a tangent vector in $\mathbb R^2$, we denote by $x^+$
($x^-$ respectively) the intersection point of the boundary $\partial\Omega$
and the ray defined by $x$ and $v$ ($-v$ respectively). We define
\[
 |v|_x=\left.\frac{\mathrm d}{\mathrm dt}\right|_{t=0^+}
 d_\Omega(x,x+tv)
 =\frac12\left(\frac1{|x-x^-|}+\frac1{|x-x^+|}\right)|v|.
\]
\begin{itemize}
\item Let $B_x(1)=\{v\in T_x\Omega\mid |v|_x<1\}$.
\item Let $E_B=\pi$ be the Euclidean volume of the open unit ball in $\mathbb R^2$.
\item Let $\operatorname{Leb}$ be the canonical Lebesgue measure of
$\mathbb R^2$ equal to $1$ on the unit square.
\item The density is $h_\Omega(x):=E_B/\operatorname{Leb}(B_x(1))$.
\end{itemize}
For any Borel set $A$ of $\Omega$, the area of $A$ is defined with respect
to the Busemann measure:
\[
 \operatorname{Area}_\Omega(A)=\int_A h_\Omega(x)\,\mathrm d\operatorname{Leb}(x).
\]
\end{definition}

\begin{remark}\label{rem:proper-densities}
There are many other areas defined with respect to different proper
densities \cite{V13}. By a co-compactness result of Benz\'ecri \cite{B60},
any pair of proper densities are comparable. Notably, there is the
Blaschke metric which is Riemannian and uniformly comparable to the
Hilbert metric \cite[Proposition 3.4]{BH13}.
\end{remark}

\subsection{Positive representations}
 In this subsection, we study the convex
$\mathbb{RP}^2$ structure on $S$ from the representation theory point of
view. The holonomy representations of the $\mathbb{RP}^2$ surfaces are
contained in $\operatorname{Hom}(\pi_1(S),\operatorname{PGL}(3,\mathbb R))$.
Modulo the equivalence relation, the representation variety for
$\operatorname{PGL}(3,\mathbb R)$ is
\[
 \operatorname{Hom}(\pi_1(S),\operatorname{PGL}(3,\mathbb R))
 /\operatorname{PGL}(3,\mathbb R),
\]
where $\operatorname{PGL}(3,\mathbb R)$ acts by conjugation. When the
holonomy representation is nice enough, there is a one-to-one correspondence
between the convex $\mathbb{RP}^2$ structure on $S$ up to equivalence and
its holonomy representation up to conjugation.

The $3$-Fuchsian representation is the composition of the discrete faithful
representation from $\pi_1(S)$ to $\operatorname{PSL}(2,\mathbb R)$ and the
irreducible representation $\operatorname{PSL}(2,\mathbb R)$ to
$\operatorname{PGL}(3,\mathbb R)$.

\begin{definition}[Hitchin component, {\cite{Hit92}}]\label{def:hitchin-component}
For $S=S_{g,0}$ being a closed surface of genus $g\ge2$, the
$\operatorname{PGL}(3,\mathbb R)$-Hitchin component $\operatorname{Hit}_3(S)$
is the connected component of
\[
 \operatorname{Hom}(\pi_1(S),\operatorname{PGL}(3,\mathbb R))/\operatorname{PGL}(3,\mathbb R)
\]
that contains all the deformations of $3$-Fuchsian representations.
\end{definition}

\begin{theorem}[{\cite{CG93,G90}}]\label{thm:choi-goldman}
For the integer $g\ge2$, the moduli space of marked strictly convex
$\mathbb{RP}^2$ structures on the surface $S_{g,0}$ is homeomorphic to
$\operatorname{Hit}_3(S_{g,0})$, which is a cell of dimension $16g-16$.
\end{theorem}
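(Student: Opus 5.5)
The plan is to split the statement into two independent claims. The first is that the deformation space $\mathfrak C(S)$ of marked (strictly) convex $\mathbb{RP}^2$ structures on $S=S_{g,0}$ is a cell of dimension $16g-16$. The second is that the holonomy map $\operatorname{hol}\colon\mathfrak C(S)\to\operatorname{Hom}(\pi_1(S),\operatorname{PGL}(3,\mathbb R))/\operatorname{PGL}(3,\mathbb R)$ is a homeomorphism onto a connected component, which must then be $\operatorname{Hit}_3(S)$ because it contains the $3$-Fuchsian locus (the ellipse $\Omega$ with Fuchsian holonomy). Strict convexity costs nothing extra: by Kuiper--Benz\'ecri, a properly convex closed surface of genus $\ge2$ with $\Omega$ not a triangle is automatically strictly convex with $C^1$ boundary.

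For the cell statement I would follow Goldman's pants-decomposition parametrization.
\begin{itemize}
\item Choose a pants decomposition of $S$ into $2g-2$ pairs of pants along $3g-3$ curves.
\item A convex structure with principal geodesic boundary on a pair of pants is determined by the boundary holonomies, which are positive hyperbolic and carry two length parameters each, together with two internal parameters (Goldman's $(s,t)$). This gives a cell of dimension $8$.
\item Gluing along each of the $3g-3$ curves matches the two boundary length parameters and introduces two gluing parameters, the twist and the bulge. Each such gluing contributes $\mathbb R^2$.
\end{itemize}
The dimension count is then
\[
 8(2g-2)-2\cdot 2(3g-3)+2(3g-3)=16g-16 .
\]
The key points are two. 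Every convex structure decomposes along the unique closed geodesic representatives of the pants curves, since holonomies of essential curves are positive hyperbolic. Every choice of parameters glues to a convex structure. Together these give a homeomorphism $\mathfrak C(S)\cong\mathbb R^{16g-16}$.

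For the holonomy statement, I would first show that $\operatorname{hol}$ is a local homeomorphism onto an open set. This follows from the Ehresmann--Thurston principle together with Koszul's openness theorem, which says that small deformations of the holonomy of a closed convex projective manifold remain holonomies of convex structures. Injectivity holds because a convex structure is recovered from its holonomy: $\Omega$ is the unique invariant properly convex domain on which the action is cocompact.

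The main obstacle is closedness. Suppose $\rho_n=\operatorname{hol}(\Sigma_n)\to\rho$ with $\rho$ in the component; we need $\rho$ to be the holonomy of a convex structure. I would argue by Benz\'ecri compactness: the space of pointed properly convex domains modulo $\operatorname{PGL}(3,\mathbb R)$ is compact. Pick basepoints $x_n\in\Omega_n$ and normalize. A fixed finite generating set then moves $x_n$ a bounded Hilbert distance, using the convergence of $\rho_n$ and Goldman's discreteness argument. Passing to a subsequence, $(\Omega_n,x_n)$ converges to a properly convex $(\Omega,x)$ that is invariant under $\rho$. To see that $\rho$ acts properly and cocompactly on $\Omega$, I would use the fact that limits of discrete faithful representations are discrete and faithful, together with a cohomological-dimension argument: $\rho(\pi_1 S)$ acts properly on the contractible surface $\Omega$, so the quotient is a closed surface of the right Euler characteristic.

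Since $\operatorname{hol}(\mathfrak C(S))$ is open, closed and connected (it is a cell), it is a whole component of the representation variety. It contains the $3$-Fuchsian points, so by Definition~\ref{def:hitchin-component} it is exactly $\operatorname{Hit}_3(S)$.
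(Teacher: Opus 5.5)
The paper gives no proof of this statement. It quotes it from \cite{G90} (the pants parametrization, and holonomy as a local homeomorphism onto an open set) and \cite{CG93} (closedness), and your outline follows that architecture. First, a small but concrete error: your displayed count is false, since $8(2g-2)-4(3g-3)+2(3g-3)=10g-10$. Matching the boundary holonomies on the two sides of a pants curve imposes two conditions (equality of the two length invariants), not four. The correct count is $8(2g-2)-2(3g-3)+2(3g-3)=16g-16$. Equivalently, there are $2(3g-3)$ curve lengths, $2(2g-2)$ internal pants parameters and $2(3g-3)$ twist--bulge parameters.

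The genuine gap is in the closedness step, which assumes what has to be proved. Neither $\rho_n\to\rho$ nor discreteness gives a basepoint $x_n\in\Omega_n$ that a fixed generating set moves a uniformly bounded $d_{\Omega_n}$-distance. Convergence of matrices says nothing about displacement in the Hilbert metric of a domain that may be degenerating. A uniform bound of this kind is essentially equivalent to the non-degeneration of the $\Omega_n$, which is the nontrivial content of \cite{CG93}. The Chuckrow-type fact you quote only makes $\rho$ discrete and faithful. Reducible discrete faithful representations exist, for example a Fuchsian group embedded block-diagonally through $\mathrm{SL}(2,\mathbb R)\subset\mathrm{PGL}(3,\mathbb R)$, so that fact cannot rule out collapse.

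There is also a bookkeeping issue. Benz\'ecri compactness gives $g_n(\Omega_n,x_n)\to(\Omega,x)$ for some $g_n\in\mathrm{PGL}(3,\mathbb R)$. The limit domain is then invariant under a limit of $g_n\rho_n g_n^{-1}$, not under $\rho$, unless you control $(g_n)$ or argue through characters.

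A cleaner formulation needs no normalization. Lift to $S^2$ and take a Hausdorff limit $K$ of the $\overline{\Omega_n}$; it is compact, convex and $\rho$-invariant. If $K$ has empty interior, the linear span of the cone over $K$ is a proper $\rho$-invariant subspace. If $K$ contains a pair of antipodal points, the span of $K\cap(-K)$ is a $\rho$-invariant subspace, proper because $K$ lies in a closed hemisphere. So the missing ingredient is a proof that the limit $\rho$ is irreducible, equivalently that the domains do not degenerate. Once that is supplied, your properness and cohomological-dimension argument does complete the proof.
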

For general $n$, some crucial features of the Hitchin component were
unravelled by Fock and Goncharov \cite{FG06} using positivity of matrices
and independently by Labourie \cite{Lab06} using Anosov flows. Then the
notion of Hitchin representation was generalized to positive representation
and Anosov representation in two directions. Both the positive
representations and the Anosov representations are proved to be discrete
and faithful.

We focus on positive representations in this paper. Let us recall the
definition of the positive representations.

\begin{definition}\label{def:flags}
A flag $F$ in $\mathbb R^3$ is a maximal filtration of vector subspaces of
$\mathbb R^3$:
\[
 \{0\}=F^{(0)}\subset F^{(1)}\subset F^{(2)}\subset F^{(3)}=\mathbb R^3,
 \qquad \dim F^{(i)}=i,
\]
denoted by $(F^{(1)},F^{(2)})$. The flag variety is denoted by $\mathcal B$.
Usually, we consider the flag $(F^{(1)},F^{(2)})$ as $(x,X)$ where
$x\in\mathbb{RP}^2$ and $X$ is a line crossing $x$ in $\mathbb{RP}^2$.
\end{definition}
A basis for a flag $F=(F^{(1)},F^{(2)})$ is a basis $(f_1,f_2,f_3)$ for
the vector space $\mathbb R^3$ such that the first $i$ vectors form a basis
for $F^{(i)}$, for $i=1,2$.

\begin{definition}\label{def:generic-position}
We say that the (ordered) $d$-tuple of flags $(F_1,\ldots,F_d)$ are in
generic position if for any mutually distinct $F_a,F_b,F_c$ and
nonnegative integers $i_a,i_b,i_c$ with $i_a+i_b+i_c\le3$, the sum
\[
 F_a^{(i_a)}+F_b^{(i_b)}+F_c^{(i_c)}
\]
is direct.
\end{definition}

\begin{definition}[{\cite[Lemma 9.7]{FG06}}]\label{def:positive-flags}
For the integer $d\ge3$, we say that the $d$-tuple of generic flags
$(F_1,\ldots,F_d)$ in $\mathbb{RP}^2$ is positive if and only if there
exists a strictly convex curve (that bounds a strictly convex domain) such
that the curve is passing the points $(F_1^{(1)},\ldots,F_d^{(1)})$ with
respect to the cyclic order and is tangent to the lines
$(F_1^{(2)},\ldots,F_d^{(2)})$ (see Figure~\ref{fig:positive-flags}). We
define $\operatorname{Conf}_d^+$ to be the space of positive $d$-tuples of
flags up to diagonal projective transformations.
\end{definition}

\begin{figure}[htbp]
\centering
\includegraphics[width=.43\textwidth]{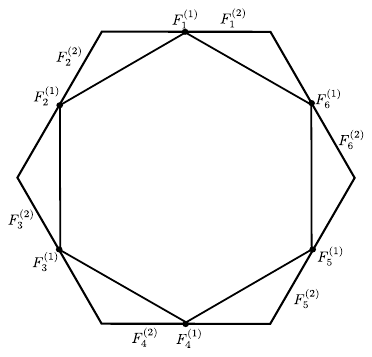}
\caption{\small A positive $6$-tuple of flags.}
\label{fig:positive-flags}
\end{figure}

For any subset $C$ of a circle, we say that the continuous map
$\xi\colon C\to\mathcal B$ is positive if for any cyclically ordered set
$(x_1,\ldots,x_d)$ of $C$ with $d\ge3$, $(\xi(x_1),\ldots,\xi(x_d))$ is a
positive $d$-tuple of flags.

\begin{definition}[Boundary at infinity]\label{def:boundary-infinity}
For each representation
\[
\rho\in\operatorname{Hom}(\pi_1(S_{g,m}),\operatorname{PGL}(3,\mathbb R))/\operatorname{PGL}(3,\mathbb R),
\]
we choose an auxiliary complete hyperbolic structure $\rho_h$ with
geodesic boundary:
\begin{enumerate}
\item For each boundary component $\alpha$, if the monodromy $\rho(\alpha)$
is unipotent, we choose $\rho_h$ such that the boundary $\alpha$ is a cusp.
\item For each boundary component $\alpha$, if the monodromy $\rho(\alpha)$
is not unipotent, we choose $\rho_h$ such that the length of $\alpha$ with
respect to $\rho_h$ is not zero.
\end{enumerate}
Let $\widetilde{(S_{g,m},\rho_h)}$ be the universal cover of
$(S_{g,m},\rho_h)$. Its boundary at infinity, denoted by
$\partial_\infty\pi_1(S_{g,m})$, is the intersection of the absolute
$\partial\mathbb H^2$ with the closure
of $\widetilde{(S_{g,m},\rho_h)}$.
\end{definition}
If $m=0$, $\partial_\infty\pi_1(S_{g,m})$ is homeomorphic to a circle.
If $m\ne0$ and each boundary $\alpha$ of $S_{g,m}$ with respect to
$\rho_h$ is a cusp, the boundary at infinity $\partial_\infty\pi_1(S_{g,m})$
is homeomorphic to a circle. If $m\ne0$ and the length of some geodesic
boundary of $S_{g,m}$ with respect to $\rho_h$ is nonzero, the boundary at
infinity $\partial_\infty\pi_1(S_{g,m})$ is homeomorphic to a Cantor set on
a circle. One can think of $\alpha^+$ and $\alpha^-$ approaching each
other when the length of $\alpha$ with respect to $\rho_h$ approaches zero.

\begin{definition}[Positive representation]\label{def:positive-representation}
The representation $\rho\colon\pi_1(S_{g,m})\to\operatorname{PGL}(3,\mathbb R)$
is positive if there exists a $\rho$-equivariant positive map
$\xi_\rho\colon\partial_\infty\pi_1(S_{g,m})\to\mathcal B$. We denote the
space of positive representations by $\operatorname{Pos}_3(S_{g,m})$.
\end{definition}
Let us recall a nice geometric description of the positive representations.
We restrict to the $\operatorname{PGL}(3,\mathbb R)$ case even though the
following statements are true for any split semisimple algebraic group.

\begin{definition}\label{def:loxodromic}
We say an element in $\operatorname{PGL}(3,\mathbb R)$ is loxodromic if one
of its lifts is conjugate to $\operatorname{diag}(\lambda_1,\lambda_2,\lambda_3)$
where $\lambda_1>\lambda_2>\lambda_3>0$. A matrix is totally positive if all
the minors are positive numbers. An upper triangular matrix is totally
positive if all the minors are positive numbers except the ones that have
to be zero due to the upper triangular condition.
\end{definition}

\begin{theorem}[{\cite[Theorem 2.8]{FG07}}]\label{thm:positive-monodromy}
Given any $\operatorname{PGL}(3,\mathbb R)$-positive representation $\rho$,
for any nontrivial nonperipheral $\gamma\in\pi_1(S_{g,m})$, the monodromy
$\rho(\gamma)$ is conjugate to a totally positive matrix, thus loxodromic.

For any nontrivial peripheral $\gamma\in\pi_1(S_{g,m})$, the monodromy
$\rho(\gamma)$ is conjugate to a totally positive upper triangular matrix.
\end{theorem}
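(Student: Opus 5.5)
The approach is to extract everything from the $\rho$-equivariant positive map $\xi_\rho$ of Definition~\ref{def:positive-representation}: equivariance makes $\rho(\gamma)$ fix the flags attached to the fixed points of $\gamma$ on $\partial_\infty\pi_1(S_{g,m})$, and positivity (Definition~\ref{def:positive-flags}) then pins down the eigenvalues and the shape of the matrix.

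\emph{Nonperipheral case.} Let $\gamma$ be nontrivial and nonperipheral. With respect to the auxiliary hyperbolic structure $\rho_h$, $\gamma$ has distinct attracting and repelling fixed points $\gamma^+,\gamma^-\in\partial_\infty\pi_1(S_{g,m})$. Put $F=\xi_\rho(\gamma^+)$ and $G=\xi_\rho(\gamma^-)$.
\begin{enumerate}
\item By equivariance $\rho(\gamma)$ fixes both $F$ and $G$. Since $F$ and $G$ are in generic position, $\rho(\gamma)$ lies in the diagonal torus for the basis $e_1\in F^{(1)}$, $e_2\in F^{(2)}\cap G^{(2)}$, $e_3\in G^{(1)}$.
\item Choose $x$ in one complementary arc of $\{\gamma^\pm\}$. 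Then $(\gamma^-,x,\gamma x,\gamma^+)$ is cyclically ordered, so $(G,\xi(x),\rho(\gamma)\xi(x),F)$ is a positive quadruple. Hence there is a strictly convex curve through these four points and tangent to the four lines.
\item The limit set $\overline{\xi^{(1)}}$ gives a $\rho(\gamma)$-invariant strictly convex curve, so $\rho(\gamma)$ preserves a properly convex cone in $\mathbb R^3$. Lift $\rho(\gamma)$ to $\mathrm{SL}(3,\mathbb R)$ preserving this cone; then its eigenvalues $\lambda_1,\lambda_2,\lambda_3$ on $e_1,e_2,e_3$ are positive.
\item The iterates $\rho(\gamma)^n\xi(x)^{(1)}$ converge to $e_1$ as $n\to+\infty$ and to $e_3$ as $n\to-\infty$. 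The tangent lines converge likewise, to $F^{(2)}$ and $G^{(2)}$. This dynamics forces $\lambda_1>\lambda_2$ and $\lambda_2>\lambda_3$. Strict convexity rules out equalities: equal eigenvalues would produce a segment or a corner in the invariant curve.
\item So $\rho(\gamma)$ is loxodromic. Any diagonal matrix with distinct positive entries is conjugate to a totally positive matrix; for instance, there is a totally positive (oscillatory) symmetric tridiagonal Jacobi matrix with any prescribed simple positive spectrum. Alternatively, one can write $\rho(\gamma)$ as $u\,h\,u'$ in Fock--Goncharov coordinates using $\xi(x)$, with $u\in U^+_{>0}$, $u'\in U^-_{>0}$ and $h$ positive diagonal, which is totally positive.
\end{enumerate}

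\emph{Peripheral case.} Let $\alpha$ be peripheral and let $\alpha^+$ be a fixed point of $\alpha$ on $\partial_\infty\pi_1(S_{g,m})$. There is only one fixed point if $\rho(\alpha)$ is unipotent, i.e.\ the cusp case, and a pair bounding a gap of the Cantor set otherwise.
\begin{enumerate}
\item Conjugate so that $F=\xi(\alpha^+)$ is the standard flag. Then $\rho(\alpha)$ is upper triangular.
\item Pick $y\ne\alpha^\pm$ and write $\xi(y)=u\cdot B^-$ with $u\in U^+$. Positivity of the triple $(F,\xi(y),\rho(\alpha)\xi(y))$ says that $u^{-1}\rho(\alpha)u$, modulo its stabilizer, lies in $U^+_{>0}$. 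Concretely, the off-diagonal minors that are not forced to vanish are positive.
\item As in step 3 of the nonperipheral case, the invariant convex curve lets one lift with positive diagonal entries.
\item Combining the two, the upper triangular matrix $u^{-1}\rho(\alpha)u$ has positive diagonal and positive non-forced minors. These are exactly the hypotheses of Fock--Goncharov's criterion for being totally positive upper triangular.
\end{enumerate}

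The step I expect to be the main obstacle is the peripheral, non-unipotent case. There $\xi$ is only defined on a Cantor set, and one must check that the gap $(\alpha^-,\alpha^+)$ does not spoil positivity of the triples used in step 2. One must also check that the diagonal part is positive with the correct ordering, and not merely some positive upper triangular element. I would handle this with the doubling/extension of $\xi$ across the gap, in the spirit of Labourie--McShane \cite[Section 9]{LM09}, and then repeat the argument of the cusp case.
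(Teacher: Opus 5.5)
You take a different route from the paper's source. The paper gives no proof here; it quotes \cite[Theorem 2.8]{FG07}. There the monodromy of a loop is written in Fock--Goncharov coordinates as a product of explicit edge and left/right-turn matrices with positive entries. A peripheral loop turns the same way at every triangle, giving a totally positive upper triangular product. A nonperipheral loop turns both ways, giving a totally positive product, and loxodromicity is read off at the end. You reverse this: you get loxodromicity first, from the dynamics of $\xi_\rho$, and then a totally positive conjugate from the spectrum alone. Your route is coordinate-free and identifies $\xi_\rho(\gamma^\pm)$ with the attracting and repelling eigenflags, which is what Definition~\ref{def:canonical-map} needs. The price is that it gives no explicit positive representative. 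It also needs $\xi_\rho$ on the whole boundary at infinity, including $\gamma^\pm$, whereas Fock--Goncharov only need positive flags at the cusps. The outline is sound, but two steps fail as written.

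In Step 3, preserving a properly convex cone does not make the middle eigenvalue positive. The glide reflection $\mathrm{diag}(e^{s},-1,e^{-s})\in O(2,1)$ preserves the Klein disk and fixes the tangent flags at its two fixed points, yet $\lambda_2<0$. The missing input is that $\gamma$ maps each complementary arc of $\{\gamma^\pm\}$ into itself. By your positive quadruple in Step 2, $\xi^1_\rho(x)$ and $\rho(\gamma)\xi^1_\rho(x)$ lie in the same component of the complement of the three lines $F^{(2)}$, $G^{(2)}$, $F^{(1)}+G^{(1)}$. A matrix diagonal in $(e_1,e_2,e_3)$ permutes these four triangles according to the signs of its eigenvalues, so all three eigenvalues have one sign. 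In Step 4 the strict inequalities then come from $\xi^1_\rho(x)$ having all three coordinates nonzero, by genericity of $(G,\xi_\rho(x),F)$; your convexity remark is not needed. In Step 5, a tridiagonal $3\times3$ matrix is never totally positive, since its $(1,3)$ entry vanishes; Jacobi matrices are only oscillatory. Instead, take a positive definite Jacobi matrix $J$ with spectrum $\{\lambda_i^{1/2}\}$; then $J^2$ is totally positive by Gantmacher--Krein.

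Finally, the obstacle you expect in the peripheral case is not there, and your proposed remedy would be circular. Here $\xi_\rho$ is defined and positive on the whole Cantor set, including both endpoints $\alpha^\pm$ of the gap, so positivity of your triples is a hypothesis. Moreover, your nonperipheral argument applies verbatim to $\alpha$: $\xi_\rho(\alpha^\pm)$ are generic fixed flags, and $\alpha^{n}x\to\alpha^{\pm}$ for $x$ in the limit set. So a non-unipotent peripheral monodromy is automatically loxodromic. It is then conjugate to $\bigl(\begin{smallmatrix}\lambda_1&a&b\\0&\lambda_2&c\\0&0&\lambda_3\end{smallmatrix}\bigr)$ with $a,b,c>0$ and $ac>b\lambda_2$, which is totally positive upper triangular. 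The Labourie--McShane double is built from the eigenflags of loxodromic boundary monodromy, so it presupposes what you want to show. Your peripheral Steps 1, 2 and 4 are needed only in the cusp case. There $\rho(\alpha)$ is unipotent by the choice of $\rho_h$, so Step 3 is vacuous and the triple argument suffices.
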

Following the above theorem, we can define the $i$-th length for $i=1,2$.
\begin{definition}[$i$-th length]\label{def:rootlengths}
Given any $\operatorname{PGL}(3,\mathbb R)$-positive representation
$\rho\in\operatorname{Pos}_3(S_{g,m})$, for $i=1,2,3$ and any
$\gamma\in\pi_1(S_{g,m})$, let $\lambda_i(\rho(\gamma))$ be the positive
eigenvalue of $\rho(\gamma)$ where
$\lambda_1(\rho(\gamma))\ge\lambda_2(\rho(\gamma))\ge\lambda_3(\rho(\gamma))>0$.
For $i=1,2$, we define the {\em $i$-th length} (or simple root length) of $\gamma$:
\[
 \ell_i(\gamma):=\ell_i^\rho(\gamma):=
 \log\frac{\lambda_i(\rho(\gamma))}{\lambda_{i+1}(\rho(\gamma))}.
\]
And {\em total-root length}
\[
 \ell(\gamma):=\ell^\rho(\gamma):=\ell_1^\rho(\gamma)+\ell_2^\rho(\gamma)
\]
is twice the Hilbert length of $\gamma$ with respect to $\rho$.
\end{definition}

\begin{definition}\label{def:boundarylengths}
Given any $\operatorname{PGL}(3,\mathbb R)$-positive representation
$\rho\in\operatorname{Pos}_3(S_{g,m})$, let $\alpha_1,\ldots,\alpha_m$ be
the oriented boundary components of the topological surface $S_{g,m}$ such
that $S_{g,m}$ is on the left side of $\alpha_s$ for $s=1,\ldots,m$. Let
\[
 \mathbf L:=\bigl(\ell_1(\alpha_1),\ldots,\ell_1(\alpha_m),
 \ell_2(\alpha_1),\ldots,\ell_2(\alpha_m)\bigr).
\]
We denote the elements in $\operatorname{Pos}_3(S_{g,m})$ with fixed
boundary simple root lengths $\mathbf L$ by
$\operatorname{Pos}_3(S_{g,m})(\mathbf L)$.
\end{definition}
Let us denote $\operatorname{Pos}_3(S_{g,m})(0)$---the collection of
positive representations with unipotent boundary monodromy---by
$\operatorname{Pos}_3^u(S_{g,m})$.

Let $\operatorname{Pos}_3^h(S_{g,m})$ be the collection of positive
representations with loxodromic boundary monodromy. Then
\[
 \operatorname{Pos}_3^h(S_{g,m}):=
 \bigcup_{\mathbf L\in\mathbb R_{>0}^{2m}}
 \operatorname{Pos}_3(S_{g,m})(\mathbf L).
\]
For $m=0$, we use
$\operatorname{Pos}_3^h(S_{g,0})=\operatorname{Pos}_3(S_{g,0})
=\operatorname{Hit}_3(S_{g,0})$.
Let $\operatorname{Pos}'_3(S_{g,m}):=
\operatorname{Pos}_3^h(S_{g,m})\cup\operatorname{Pos}_3^u(S_{g,m})$.

\begin{definition}[Canonical $\rho$-equivariant map]\label{def:canonical-map}
When $m=0$, let $\xi_\rho$ be the unique $\rho$-equivariant positive
boundary map \cite{Lab06}. For the following boundary cases, assume $m>0$.
For any $\operatorname{PGL}(3,\mathbb R)$-positive representation
$\rho\in\operatorname{Pos}_3^h(S_{g,m})$ with loxodromic boundary
monodromy, there is a canonical $\rho$-equivariant map
\[
 \xi_\rho:=(\xi_\rho^1,\xi_\rho^2) \colon\partial_\infty\pi_1(S_{g,m})\to\mathcal B
\]
such that for
any peripheral $\delta\in\pi_1(S_{g,m})$ (by
Theorem~\ref{thm:positive-monodromy}, $\rho(\delta)$ has eigenvectors
$\delta_1,\delta_2,\delta_3$ and the corresponding eigenvalues
$\lambda_1,\lambda_2,\lambda_3$ satisfy $\lambda_1>\lambda_2>\lambda_3>0$),
the eigenvectors $(\delta_3,\delta_2,\delta_1)$
($(\delta_1,\delta_2,\delta_3)$ respectively) form a basis for the flag
$\xi_\rho(\delta^-)$ ($\xi_\rho(\delta^+)$ respectively).

For any $\rho\in\operatorname{Pos}_3^u(S_{g,m})$ with unipotent boundary
monodromy, there is only one choice of $\xi_\rho$
(\cite[Theorem 1.14]{FG06}). We also call $\xi_\rho$ the canonical
$\rho$-equivariant map.
\end{definition}

Similar to Theorem~\ref{thm:choi-goldman}, for
$\operatorname{Pos}_3^u(S_{g,m})$, we have
\begin{theorem}[{\cite{Mar10}}]\label{thm:marquis-cusps}
For the integer $m\ge1$, the deformation space of marked cusped
strictly convex $\mathbb{RP}^2$ structures on the surface $S_{g,m}$ is
homeomorphic to $\operatorname{Pos}_3^u(S_{g,m})$, which is a cell of
dimension $16g-16+6m$.
\end{theorem}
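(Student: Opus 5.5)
This is Marquis' theorem, cited from \cite{Mar10}, so the paper will just quote it. If I had to reprove it, my plan is as follows.

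\emph{Step 1: holonomy lands in $\operatorname{Pos}_3^u(S_{g,m})$.} Given a marked cusped strictly convex $\mathbb{RP}^2$ structure $\Omega/\Gamma$ of finite volume, I take the holonomy $\rho$. Each peripheral element acts on the strictly convex $\Omega$ with a cusp neighbourhood of finite Hilbert area. This forces $\rho(\alpha)$ to be parabolic, hence unipotent in $\operatorname{PGL}(3,\mathbb R)$. Strict convexity and $C^1$ regularity of $\partial\Omega$ let me define a boundary map $\xi_\rho(x)=(x,T_x\partial\Omega)$ on the limit set. This map is equivariant. Since $\partial\Omega$ is a strictly convex curve through the points with the given tangents, it is positive in the sense of Definition~\ref{def:positive-flags}. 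So $\rho$ is positive with unipotent boundary monodromy.

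\emph{Step 2: the inverse map.} Conversely, take $\rho\in\operatorname{Pos}_3^u(S_{g,m})$ with its unique positive map $\xi_\rho$ (Definition~\ref{def:canonical-map}). Its image $\xi^1_\rho(\partial_\infty\pi_1)$ is a circle, since all boundary components are cusps. I would show that the closure of this image bounds a strictly convex domain $\Omega$. The tool is to approximate by the strictly convex curves through finite positive tuples, which are nested. I would then check that $\Gamma=\rho(\pi_1)$ acts properly and cocompactly away from the cusps. Discreteness comes from positivity (Theorem~\ref{thm:positive-monodromy}). This gives the inverse map, and it must be shown to be continuous.

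\emph{Step 3: topology.} For the cell claim, I would use Fock--Goncharov coordinates. An ideal triangulation of $S_{g,m}$ has $6g-6+3m$ edges and $4g-4+2m$ triangles. The coordinates give $\operatorname{Pos}_3(S_{g,m})\cong\mathbb R_{>0}^{16g-16+8m}$ after adding 2 edge coordinates per edge and 1 triple ratio per triangle. The condition that the boundary monodromies are unipotent is $2m$ monomial equations in these positive coordinates. In log coordinates these are linear and independent. Their solution set is therefore a cell of dimension $16g-16+6m$.

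\emph{Main obstacle.} I expect the hardest step to be Step 2. The difficulty is proving properness and strict convexity of $\Omega$ near the parabolic fixed points, where the cusp geometry must be controlled uniformly.
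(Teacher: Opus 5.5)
You are right that the paper gives no argument: the statement is quoted from \cite{Mar10} without proof, so there is nothing to compare except your supplementary reconstruction. Your sketch obtains the cell from Fock--Goncharov coordinates, where the unipotent locus is a linear subspace in log coordinates. The alternative is a pants-gluing parametrization in the style of Goldman \cite{G90}. It uses $4$ parameters per interior curve and $2$ per pair of pants, giving $4(3g-3+m)+2(2g-2+m)=16g-16+6m$, which is twice the $d=8g-8+3m$ of this paper. Your route makes the cell structure immediate. The pants route gives Fenchel--Nielsen-type coordinates tied directly to the geometry, but needs separate work on pants with cusps. Your outline is viable, but several steps are misstated.

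In Step 3, Fock--Goncharov coordinates parametrize \emph{framed} positive local systems, not $\operatorname{Pos}_3(S_{g,m})$. The claim $\operatorname{Pos}_3(S_{g,m})\cong\mathbb R_{>0}^{16g-16+8m}$ is false, because a loxodromic peripheral monodromy admits more than one positive invariant framing. This is why Definition~\ref{def:canonical-map} has to single out a canonical $\xi_\rho$.

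The argument still goes through. A totally positive unipotent upper triangular matrix is regular unipotent and has a unique invariant flag. Hence the framed locus cut out by your $2m$ equations maps homeomorphically onto $\operatorname{Pos}_3^u(S_{g,m})$. Independence of those equations also needs a line; for instance, the boundary simple-root lengths can be prescribed freely, so the linear map has open image.

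In Step 1, the hypothesis gives strict convexity but not $C^1$ regularity, and without it $T_x\partial\Omega$ is undefined. The facts that finite area forces parabolic peripheral holonomy and a $C^1$ boundary are a theorem of Marquis \cite{Mar12}, not formal consequences of the definitions.

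In Step 2, Theorem~\ref{thm:positive-monodromy} does not give discreteness, since it only classifies individual elements. Use the discreteness of positive representations \cite{FG06}. Better, show that the $\rho$-translates of the ideal triangles with vertices in $\xi^1_\rho$ tile $\Omega$; this gives proper discontinuity and $\Omega/\Gamma\cong S$ at once. You also need finite area at the cusps. A regular unipotent is conjugate into $\mathrm{SO}(2,1)$, so $\Omega$ contains a $\rho(\alpha)$-invariant ellipse tangent at the fixed point. Monotonicity of Hilbert area then bounds each cusp by a hyperbolic cusp.
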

\begin{remark}\label{rem:minimal-domain}
Each $\rho\in\operatorname{Pos}_3(S_{g,m})
\setminus\operatorname{Pos}_3^u(S_{g,m})$
determines a smallest
$\rho(\pi_1(S_{g,m}))$-invariant properly convex open domain
in $\mathbb{RP}^2$; see the proof of
\cite[Proposition 6.16]{Mar12}.
\end{remark}

\begin{remark}\label{rem:doubling}
By \cite[Section 9]{LM09}, a $\operatorname{PGL}(3,\mathbb R)$-representation
for the bordered surface is called Hitchin if it has loxodromic boundary
and is deformed along a path of loxodromic-bordered representations from a
$3$-Fuchsian representation. We denote the space of
$\operatorname{PGL}(3,\mathbb R)$-Hitchin representations up to conjugation
by $\operatorname{Hit}_3(S_{g,m})$. By \cite[Theorem 9.1]{LM09},
$\operatorname{Hit}_3(S_{g,m})\subset\operatorname{Pos}_3^h(S_{g,m})$.
By the canonical doubling construction in
\cite[Definition 9.2.2.3]{LM09} and the positivity of gluing in
\cite[Theorem 7.6]{FG06}, any
$\rho\in\operatorname{Pos}_3^h(S_{g,m})$ with $2g-2+m\ge1$ and $m\ge1$
is glued into a positive representation $d\rho$ for $S_{2g-1+m,0}$ in a
canonical way. By \cite[Theorem 1.15]{FG06},
$\operatorname{Hit}_3(S_{2g-1+m,0})=\operatorname{Pos}_3(S_{2g-1+m,0})$,
so $d\rho$ is a Hitchin representation. Thus $d\rho$ is deformed from a
$3$-Fuchsian representation, and restriction to $S_{g,m}$ induces a
deformation path from the $3$-Fuchsian representation for $S_{g,m}$ to
$\rho$. This proves
$\operatorname{Hit}_3(S_{g,m})=\operatorname{Pos}_3^h(S_{g,m})$.
\end{remark}

\subsection{Projective invariants}
\begin{definition}[Triple ratios]\label{defn:tripleratio}
Consider the triple of flags $(F,G,H)$ in generic position, with bases
\[
 (f_1,f_2,f_3),\quad(g_1,g_2,g_3),\quad(h_1,h_2,h_3).
\]
Then the triple ratio $T(F,G,H)$ is defined by
\[
 T(F,G,H):=
 \frac{\Delta(f^2\wedge g^1)}{\Delta(f^2\wedge h^1)}
 \frac{\Delta(g^2\wedge h^1)}{\Delta(g^2\wedge f^1)}
 \frac{\Delta(h^2\wedge f^1)}{\Delta(h^2\wedge g^1)},
\]
where $w^i:=w_1\wedge\cdots\wedge w_i$, which is
$\operatorname{PGL}(3,\mathbb R)$ invariant. Notice the symmetry
\[
 T(F,G,H)=T(G,H,F)=T(H,F,G).
\]
\end{definition}
Check Figure~\ref{fig:triple-ratio} for a geometric description of the triple ratio.

\begin{figure}[htbp]
\centering
\includegraphics[width=.3\textwidth]{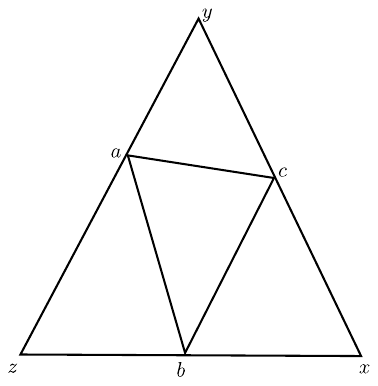}
\caption{\small The flags are $F=(a,yz)$, $G=(b,zx)$, $H=(c,xy)$.
Let $|\cdot|$ be the Euclidean norm. Then the triple ratio
$T(F,G,H)=\frac{|ya|}{|az|}\frac{|zb|}{|bx|}\frac{|xc|}{|cy|}$.
By Ceva's theorem, $T(F,G,H)=1$ if and only if $ax$, $by$ and $cz$
are concurrent.}
\label{fig:triple-ratio}
\end{figure}

\begin{definition}[Edge functions]\label{def:edge-functions}
Let $(X,Y,Z,W)$ be the quadruple of flags in generic position, and choose
their bases
\[
 (x_1,x_2,x_3),\quad(y_1,y_2,y_3),\quad(z_1,z_2,z_3),\quad(w_1,w_2,w_3).
\]
For $i=1,2$, the edge functions are defined to be
\begin{align*}
 D_1(X,Y,Z,W)&:=-\frac{\Delta(x^2\wedge z^1)}{\Delta(x^2\wedge w^1)}
 \cdot\frac{\Delta(x^1\wedge y^1\wedge w^1)}{\Delta(x^1\wedge y^1\wedge z^1)},\\
 D_2(X,Y,Z,W)&:=-\frac{\Delta(y^2\wedge w^1)}{\Delta(y^2\wedge z^1)}
 \cdot\frac{\Delta(x^1\wedge y^1\wedge z^1)}{\Delta(x^1\wedge y^1\wedge w^1)},
\end{align*}
which are $\operatorname{PGL}(3,\mathbb R)$ invariants. Notice the symmetry
\[
 D_1(X,Y,Z,W)=D_2(Y,X,W,Z).
\]
\end{definition}
As shown in Figure~\ref{fig:edge-functions}, the configuration space
$\operatorname{Conf}_4^+$ can be parameterized by the positive numbers
\[
 (A,B,C,D):=\bigl(T(X,W,Y),D_1(X,Y,Z,W),D_2(X,Y,Z,W),T(X,Y,Z)\bigr).
\]
This parametrization depends on the triangulation of the polygon
$(a,c,e,g)$. We can choose the triangulation $\{cg\}$ instead of
$\{ae\}$. Then the parameters are changed into
\[
 (A',B',C',D'):=\bigl(D_2(Z,W,Y,X),T(Z,X,W),T(Z,W,Y),D_1(Z,W,Y,X)\bigr).
\]
Then by \cite[Section 2]{FG07}, we have
\begin{align*}
 (A',B',C',D')=\biggl(&\frac{1+C}{AC(1+B)},\quad
 D\frac{1+C+CA+CAB}{1+B+BD+BDC},\\[-2pt]
 &A\frac{1+B+BD+BDC}{1+C+CA+CAB},\quad
 \frac{1+B}{DB(1+C)}\biggr).
\end{align*}

\begin{figure}[htbp]
\centering
\includegraphics[width=.40\textwidth]{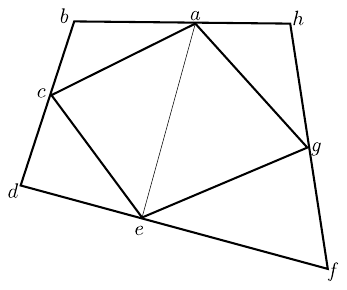}
\caption{\small The flags are $X=(a,bh)$, $W=(c,bd)$, $Y=(e,df)$,
$Z=(g,fh)$. Up to $\operatorname{PGL}(3,\mathbb R)$, the position of
$(X,W,Y)$ is decided by the triple ratio $T(X,W,Y)$. The convention for
cross ratio in $\mathbb{RP}^1$ is
$\operatorname{CR}(\alpha,\beta,\gamma,\delta)
:=\frac{\alpha-\gamma}{\alpha-\delta}\cdot
\frac{\beta-\delta}{\beta-\gamma}$.
We have $D_1(X,Y,Z,W)=-\operatorname{CR}(ab,ae,ag,ac)$ deciding the line
$ag$ and $D_2(X,Y,Z,W)=-\operatorname{CR}(ef,ea,ec,eg)$ deciding the line
$eg$, which fix the point $g$. In the end, the line $hf$ is decided by
the triple ratio $T(X,Y,Z)$.}
\label{fig:edge-functions}
\end{figure}

The space $\operatorname{Conf}_d^+$ can be understood as a map from a
cyclically ordered subset $Q$ of $S^1$ to $\mathcal B$. There is the
$d$-gon $D_d$ with $Q$ as vertices.
The triangulation above is equivalent to the triangulation of the $d$-gon $D_d$.

\begin{definition}\label{def:fg-parameters}
For the integer $d\ge3$, let $(x_1,\ldots,x_d)$ be the cyclically ordered
set $Q$. For any anticlockwise ordered triangle
$\Delta:=(x_i,x_j,x_k)$, we define
\[
 T(\Delta):=T(\xi(x_i),\xi(x_j),\xi(x_k)).
\]
For any edge $e$ with two adjacent anticlockwise ordered triangles
$(x_i,x_j,x_k)$ and $(x_i,x_l,x_j)$, we choose an orientation
$\vec e=(x_i,x_j)$; for $r=1,2$, we define
\[
 D_r(\vec e):=D_r(\xi(x_i),\xi(x_j),\xi(x_k),\xi(x_l)).
\]
\end{definition}
We can use the above parameters to parameterize $\operatorname{Conf}_d^+$.

\begin{proposition}[{\cite{FG06}}]\label{prop:fg-positive-atlas}
For $d\ge3$, given a triangulation $\mathcal T$ of the $d$-gon $D_d$,
let $\Theta$ be the collection of anticlockwise ordered triangles of
$\mathcal T$. Let $E$ be the collection of interior edges of $\mathcal T$.
There exists a real analytic diffeomorphism
$\theta\colon\operatorname{Conf}_d^+\to\mathbb R_{>0}^{3d-8}$,
\[
 \xi\longmapsto
 \bigl((T(\Delta))_{\Delta\in\Theta},
 (D_1(\vec e),D_2(\vec e))_{e\in E}\bigr).
\]
\end{proposition}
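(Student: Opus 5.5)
The plan is to prove the statement by induction on $d$, using one convenient triangulation, and then to pass to an arbitrary triangulation by flips. First, the count: a triangulation $\mathcal T$ of $D_d$ has $d-2$ triangles and $d-3$ interior edges, so $\theta$ has $(d-2)+2(d-3)=3d-8$ coordinates. Each coordinate is a ratio of determinants in chosen bases, and it is independent of the bases and invariant under $\operatorname{PGL}(3,\mathbb R)$. So $\theta$ is a well-defined real analytic (indeed rational) map on the space of generic $d$-tuples modulo $\operatorname{PGL}(3,\mathbb R)$. Its components are nonzero at generic configurations.

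\emph{Base case $d=3$.} Up to $\operatorname{PGL}(3,\mathbb R)$, a generic triple of flags can be normalized as in Figure~\ref{fig:triple-ratio}. The points $x,y,z$ are fixed as the vertices of the triangle cut out by the three lines, and the flag points are $a\in yz$, $b\in zx$, $c\in xy$. The residual projective freedom fixing $x,y,z$ is the diagonal torus. One uses it to normalize $a$ and $b$, leaving only $c$ free on $xy$, and $c$ is then determined by $T(F,G,H)$. The triple is positive, meaning a strictly convex curve passes through $a,b,c$ tangent to the three lines, exactly when $a,b,c$ lie in the open sides of the triangle $xyz$. Taking the standard affine chart, this is exactly when all three Euclidean ratios in Figure~\ref{fig:triple-ratio} are positive, i.e.\ $T>0$. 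Hence $\operatorname{Conf}_3^+\cong\mathbb R_{>0}$.

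\emph{Inductive step.} Choose $\mathcal T$ to contain the ear triangle $(x_{d-1},x_d,x_1)$, and let $\mathcal T'$ be the induced triangulation of the $(d-1)$-gon on $x_1,\dots,x_{d-1}$. Given a positive $(d-1)$-tuple, adding the flag $\xi(x_d)$ across the edge $e=(x_1,x_{d-1})$ is controlled by the picture in Figure~\ref{fig:edge-functions}.
\begin{itemize}
\item $D_1(\vec e)$ is a cross ratio of lines through $\xi^1(x_1)$, so it determines the line joining $\xi^1(x_1)$ and $\xi^1(x_d)$.
\item $D_2(\vec e)$ likewise determines the line joining $\xi^1(x_{d-1})$ and $\xi^1(x_d)$. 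Together these fix the point $\xi^1(x_d)$.
\item The triple ratio of the new triangle then fixes the line $\xi^2(x_d)$ in the pencil through that point.
\end{itemize}
Each step is given by explicit rational formulas with rational inverses. So $\operatorname{Conf}_d^+$ is identified with an open subset of $\operatorname{Conf}_{d-1}^+\times\mathbb R^3$, and the remaining task is to show this subset is exactly $\operatorname{Conf}_{d-1}^+\times\mathbb R_{>0}^3$.

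\emph{Main obstacle: positivity of the new coordinates versus convexity.} Suppose the first $d-1$ flags bound a strictly convex curve. I expect the following equivalence: $D_1,D_2>0$ holds exactly when $\xi^1(x_d)$ lies in the open triangle cut out by the chord $\xi^1(x_1)\xi^1(x_{d-1})$ and the tangent lines $\xi^2(x_1),\xi^2(x_{d-1})$. This is the sign condition for the relevant cross ratios of lines, with the minus signs in the definitions of $D_r$ accounted for. Given that, $T>0$ should hold exactly when $\xi^2(x_d)$ separates $\xi^1(x_d)$ from the two flagged endpoints correctly, applying the base case to the triple $(\xi(x_{d-1}),\xi(x_d),\xi(x_1))$.

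Under these conditions the convex arc between $x_{d-1}$ and $x_1$ can be replaced by a new strictly convex arc. The new arc passes through $\xi^1(x_d)$ tangent to $\xi^2(x_d)$ and stays inside the small triangle, so it preserves global convexity and the cyclic order. Conversely, for a positive $d$-tuple, any convex curve through the flags forces these incidence positions, hence positive coordinates. This extension-of-the-convex-curve argument is the step I expect to need the most care. I would first check it in an affine chart where the small triangle is bounded.

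\emph{Arbitrary triangulations.} Any two triangulations are related by flips. Under a flip, the coordinates change by the transformation quoted from \cite[Section 2]{FG07}. That transformation is subtraction-free, so it maps $\mathbb R_{>0}^4$ to itself, with a subtraction-free inverse given by the reverse flip. Hence the diffeomorphism statement for the fan-type triangulation transfers to every $\mathcal T$.
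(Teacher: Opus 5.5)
The paper does not prove this proposition; it quotes \cite{FG06}. There, positivity of a configuration of flags is defined algebraically through total positivity, and the coordinates for different triangulations are related by subtraction-free cluster transformations. The strictly-convex-curve description, which this paper takes as its definition, is a separate result there (\cite[Lemma~9.7]{FG06}).

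Your ear-by-ear induction works directly with the convex-curve definition. It is elementary and specific to $\mathbb{RP}^2$, and in effect it reproves that lemma. Its core is sound, and the equivalence you only ``expect'' does hold:
\begin{itemize}
\item $D_1(\vec e)$ is minus the cross ratio of four lines through $\xi^1(x_1)$: the tangent, the chord, the line to the third vertex of the old triangle on $e$, and the line to $\xi^1(x_d)$. So $D_1>0$ exactly when the tangent and the chord separate the last two lines. The same holds for $D_2$ at $\xi^1(x_{d-1})$.
\item Hence $D_1,D_2>0$ exactly when $\xi^1(x_d)$ lies in the open triangle with vertices $\xi^1(x_1)$, $\xi^1(x_{d-1})$ and $p=\xi^2(x_1)\cap\xi^2(x_{d-1})$, on the far side of the chord from the other flags.
\item Any two cevians from $\xi^1(x_1)$ and $\xi^1(x_{d-1})$ meet inside this triangle, which gives surjectivity onto $\mathbb R_{>0}^2$.
\item Then $T>0$ exactly when $\xi^2(x_d)$ meets the open sides $(\xi^1(x_1),p)$ and $(\xi^1(x_{d-1}),p)$. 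These lines form an open arc of the pencil, on which $T$ is a M\"obius function onto $(0,\infty)$.
\end{itemize}
You should say explicitly which of the four triangles cut out by the three lines you mean. The same applies in the base case, where you need a sign-changing torus element to put $a,b$ on the sides of one fixed triangle.

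The step that is wrong as written is the passage to arbitrary triangulations. The formula you quote from \cite[Section~2]{FG07} describes a flip only in $\operatorname{Conf}_4^+$. In a $d$-gon with $d\ge5$, flipping the diagonal of a quadrilateral $Q$ also changes $D_1,D_2$ on every side of $Q$ that is an interior edge of $\mathcal T$. This happens because the vertex opposite that side, in its triangle inside $Q$, changes. The edge function is therefore multiplied by a cross ratio of four lines through an endpoint of the side. For example, at $a$ on the side joining $X$ and $W$ (notation after Definition~\ref{def:edge-functions}), the factor is $1+B^{-1}$ or its inverse.

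These factors are again subtraction-free, so the step can be repaired, but you do not need flips at all. Every triangulation of a $d$-gon with $d\ge4$ has an ear. After a cyclic relabelling, any $\mathcal T$ contains $(x_{d-1},x_d,x_1)$ and induces an arbitrary triangulation $\mathcal T'$ of the $(d-1)$-gon. If you state the inductive hypothesis for all triangulations of the $(d-1)$-gon, your induction proves the statement for every $\mathcal T$ directly.
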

Let us recall the ideal triangulation. For more details, check
\cite{Thu79,CB88,PH92,Bon01}.

\begin{definition}[Ideal triangulation]\label{def:ideal-triangulation}
We equip $S_{g,m}$ with a hyperbolic metric $\rho_h$. Choose finitely many
disjoint simple closed geodesics $\mathcal{P}$ (which can be empty). The
ideal triangulation $\mathcal T$ of $S_{g,m}$ (subordinate to $\mathcal{P}$)
is a simple maximal filling geodesic lamination of $(S_{g,m},\rho_h)$
containing $\mathcal{P}$ with finitely many leaves.
\end{definition}
\begin{remark}\label{rem:global-darboux}
When $\mathcal{P}$ cuts the surface into pairs of pants, denote their
collection by $\mathbb{P}$; thus $\mathcal{P}$ is the set of interior
pants curves of $\mathbb{P}$.
For $m=0$ when the ideal triangulation $\mathcal T$ has nonempty
$\mathcal{P}$, Goldman \cite{G90} parameterized the
$\operatorname{PGL}(3,\mathbb R)$-Hitchin component
$\operatorname{Hit}_3(S_{g,0})$. Then Kim \cite{Kim99} provided a global
Darboux coordinate, where some parameters are modified in Choi--Jung--Kim
\cite{CJK21}. Using Fock--Goncharov's parameters \cite{FG06} in
Definition~\ref{def:fg-parameters}, Bonahon and Dreyer \cite{BD14}
parameterized $\operatorname{Hit}_3(S_{g,0})$ with respect to an ideal
triangulation $\mathcal T$ on the closed surface $S_{g,0}$ and a choice
of transverse arcs to $\mathcal{P}$. Then Sun--Wienhard--Zhang
\cite{SWZ20,SZ25} provided a global Darboux coordinate with respect to an
ideal triangulation $\mathcal T$ subordinate to a pants decomposition
$\mathbb{P}$ and a choice of transverse arcs to $\mathcal{P}$. Later on,
we will use the last mentioned global Darboux coordinate system for our
computation.
\end{remark}

\section{Area-bounded moduli spaces}\label{section:3}
In this section we define the canonical area and derive the coordinate
estimates needed below. 

\subsection{Canonical area}
Given a $\operatorname{PGL}(3,\mathbb R)$-positive representation $\rho$
with loxodromic boundary monodromy on $S_{g,m}$, $m\ge1$, let
$\Omega\subset\mathbb{RP}^2$ be the minimal $\rho(\pi_1(S_{g,m}))$-invariant
convex domain. By \cite{Mar12}, the area measured using the Hilbert metric
of this minimal domain is infinite. This is not the metric we use. Indeed,
when $\rho$ is $3$-Fuchsian, the minimal invariant convex domain is the
universal cover of the surface with geodesic boundary, and its own Hilbert
metric is not the hyperbolic metric inherited from the Klein disk.

For $m\ge1$ and $\rho\in\operatorname{Pos}_3^h(S_{g,m})$, consider the canonical
double $d\rho$ of \cite[Definition 9.2.2.3]{LM09}. The
$d\rho(\pi_1(S_{2g-1+m,0}))$-invariant convex domain, denoted by
$\Omega_\rho$, is unique. When $\rho$ is $3$-Fuchsian, $\Omega_\rho$ is a
disk up to projective transformation, so its Hilbert metric is the
hyperbolic metric on the original surface. As a subsurface of the doubled
surface, $S_{g,m}$ has finite area in this metric.

\begin{figure}[htbp]
\centering\includegraphics[width=.3\textwidth]{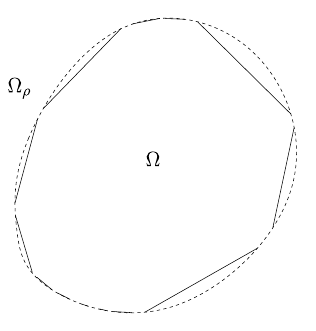}
\caption{\small The $d\rho$-invariant convex domain $\Omega_\rho$ is bounded by the
dotted curve. The minimal $\rho(\pi_1(S_{g,m}))$-invariant convex domain
$\Omega$ is obtained by removing the regions illustrated by the straight
line segments from $\Omega_\rho$.}\label{fig:canonical-domain}
\end{figure}

When $m=0$, let $\Omega_\rho$ be the unique
$\rho(\pi_1(S_{g,0}))$-invariant strictly convex domain. In this case,
$S_{g,0}$ is identified with the closed quotient
$\Omega_\rho/\rho(\pi_1(S_{g,0}))$.

\begin{definition}[Canonical area]\label{definition:carea}
For $\rho\in\operatorname{Pos}_3^h(S_{g,m})$, using the domain
$\Omega_\rho$ specified above, define
\[
 A(\rho):=\operatorname{Area}_{H,\Omega_\rho}(S_{g,m}),
\]
where the area is measured using the Hilbert--Busemann measure
of Definition~\ref{def:busemann-area}.

For $m>0$ and $\rho\in\operatorname{Pos}_3^u(S_{g,m})$, use the same definition
with $\Omega_\rho$ the unique
$\rho(\pi_1(S_{g,m}))$-invariant strictly convex domain.
\end{definition}

\begin{definition}\label{definition:area-locus}
For $t\ge0$ and fixed $\mathbf L\in\mathbb R_{>0}^{2m}$ let
\[
 \operatorname{APos}_3^t(S_{g,m})(\mathbf L)
 =\{\rho\in\operatorname{Pos}_3(S_{g,m})(\mathbf L):A(\rho)\le t\}.
\]
Its quotient by the mapping class group is denoted by
\[
 \mathcal{AH}^t(S_{g,m})(\mathbf L)
 =\operatorname{APos}_3^t(S_{g,m})(\mathbf L)/\operatorname{Mod}(S_{g,m}).
\]
\end{definition}
For $m>0$ with loxodromic boundary monodromy, the projective involution of the
canonical double exchanges its two copies of $S_{g,m}$, so $A(\rho)$
equals half the Hilbert--Busemann area of the closed surface defined
by $d\rho$. The canonical double is invariant under changes of marking
and depends continuously on $\rho$ \cite[Section~9]{LM09}.
For $m>0$ with unipotent boundary monodromy or $m=0$, the area is that of the convex projective structure
itself and is continuous and invariant under changes of marking.
Consequently, $A(\rho)$ is mapping class group invariant and continuous on
$\operatorname{Pos}_3^h(S_{g,m})$, and its sublevel sets are Borel.
The action on the
positive representation space is properly discontinuous
\cite{G90,Lab08}; thus this quotient is well defined. Since each canonical
area is finite, the loci $\mathcal{AH}^t$ exhaust
$\mathcal H(S_{g,m})(\mathbf L)$.

\begin{notation}\label{notation:surface-invariants}
For an ideal triangle $\Delta$ and an oriented ideal edge $\vec e$, use
the canonical equivariant map to define $T(\Delta)(\rho)$ and
$D_i(\vec e)(\rho)$ as in Section~2.3. Equivariance makes the definitions
independent of the chosen lift. Write $t(\Delta)=\log T(\Delta)$ and
$\sigma_i(\vec e)=\log D_i(\vec e)$.
\end{notation}

\begin{proposition}\label{prop:triangle-bound}
If $A(\rho)\le t$, every ideal triangle satisfies
\[
 |\log T(\Delta)(\rho)|\le\sqrt{24t/\pi}.
\]
\end{proposition}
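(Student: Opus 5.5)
The plan is to find, inside the image of each ideal triangle $\Delta$, an embedded region whose Hilbert--Busemann area grows at least quadratically in $|\log T(\Delta)|$, and then compare that region with the whole surface. The constant $\sqrt{24t/\pi}$ suggests aiming for
\[
 \operatorname{Area}(\Delta)\ \ge\ \frac{\pi}{24}\bigl(\log T(\Delta)\bigr)^2 .
\]

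\textbf{Step 1: reduce to a single ideal triangle.} Lift $\Delta$ to $\widetilde S$. Its three vertices go under the canonical map $\xi_\rho$ to three flags $(x_i,X_i)$. The points $x_i$ lie on $\partial\Omega_\rho$: for $m>0$ we use the double $d\rho$, and for $m=0$ or unipotent boundary we use the strictly convex domain. The open projective triangle $\tau$ spanned by $x_1,x_2,x_3$ lies in $\Omega_\rho$. Its interior projects injectively to $S_{g,m}$, since the ideal triangles of $\mathcal T$ have disjoint interiors in the universal cover. Hence
\[
 \operatorname{Area}_{\Omega_\rho}(\tau)\le A(\rho)\le t .
\]
In the doubled case this still holds, because the half of the double containing $S_{g,m}$ contains the lifted triangle.

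\textbf{Step 2: shrink the domain.} The lines $X_i$ are supporting lines of $\Omega_\rho$ at $x_i$. So $\Omega_\rho$ is contained in the region $R$ bounded by $X_1,X_2,X_3$ that contains $\tau$. This region is a triangle, possibly unbounded in an affine chart. Hilbert metrics decrease when the domain grows, so $\operatorname{Leb}(B^{\Omega_\rho}_x(1))\le \operatorname{Leb}(B^{R}_x(1))$. The Busemann densities therefore satisfy $h_{\Omega_\rho}\ge h_R$. It follows that
\[
 \operatorname{Area}_{\Omega_\rho}(\tau)\ \ge\ \operatorname{Area}_R(\tau).
\]
The configuration $(R,\tau)$ is determined up to $\operatorname{PGL}(3,\mathbb R)$ by the single number $T=T(\Delta)$, as in Figure~\ref{fig:triple-ratio}.

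\textbf{Step 3: the model computation, which is the main obstacle.} Normalize so that $R$ is the standard simplex domain $\{[a:b:c]: a,b,c>0\}$ and $x_1,x_2,x_3$ are points on its three sides. In log coordinates $(u,v)=(\log(a/c),\log(b/c))$, the Hilbert metric of the simplex is the hexagonal normed plane. Its Busemann density is a constant multiple of Lebesgue measure in $(u,v)$; the constant comes from the area of the unit hexagon, which is $3/4$ times the area of a circumscribed region, and fixing it is where the $\pi$ comes from. The triangle $\tau$ is bounded by three projective segments, which in log coordinates become concave curves asymptotic to lines. When $T\neq1$, the concurrence in Ceva's theorem fails, and the region $\tau$ contains a triangle in the $(u,v)$-plane whose side length is comparable to $|\log T|$.

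I would compute this explicitly, or at least bound it from below by inscribing a Euclidean triangle in $(u,v)$ with vertices at the three "corners" of $\tau$. The aim is area at least $\tfrac{\pi}{24}(\log T)^2$. The symmetry $T\mapsto T^{-1}$, given by reversing orientation (duality), lets us assume $T\ge1$. Combining the three steps gives $(\log T)^2\le 24t/\pi$, which is the claim.

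The hard part is getting the explicit constant in Step 3. I would first try to find a sub-triangle in $(u,v)$ coordinates with explicit vertices given in terms of $\log T$. If that fails to produce the constant $24/\pi$, I would settle for a weaker universal constant; the downstream volume estimates only need some bound of the form $|\log T(\Delta)|\le C\sqrt t$.
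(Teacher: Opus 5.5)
Your Steps 1 and 2 are exactly the paper's argument. The ideal triangle projects injectively, so its area is at most $t$. Monotonicity of the Busemann density then lets you replace $\Omega_\rho$ by the triangle $R$ cut out by the three supporting lines, and the problem reduces to the model pair $(R,\tau)$, which depends only on $T$. The gap is Step 3, which carries all the quantitative content and which you do not carry out. A triangle "whose side length is comparable to $|\log T|$" yields no constant. Your stated fallback, a weaker universal constant, would not prove the proposition, which asserts the specific bound $\sqrt{24t/\pi}$. The paper closes this step by citing Adeboye--Cooper \cite{AC18}, which gives $\operatorname{Area}_R(\tau)=\frac{\pi}{24}\bigl(\pi^2+(\log T)^2\bigr)$ (their $p$-area times $\pi/3$).

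Your own inscribed-triangle idea does close the gap, with the sharp constant, once you fix its two ingredients exactly.

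\emph{Density.} With the paper's normalization $d_\Omega=\frac12\log(\cdot)$, the simplex norm in $(u,v)=(\log(a/c),\log(b/c))$ is $\frac12\max(|u|,|v|,|u-v|)$. Its unit ball is a hexagon of area $16-4=12$, so $h_R\,d\operatorname{Leb}=\frac{\pi}{12}\,du\,dv$; this is the same density used in the proof of Proposition~\ref{prop:closed-bulge}.

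\emph{Normalization.} Send $X_1,X_2,X_3$ to $\{a=0\},\{b=0\},\{c=0\}$ and rescale diagonally so that $x_1=[0:1:1]$, $x_2=[1:0:1]$, $x_3=[1:s:0]$ with $s>0$. A direct determinant computation gives $T(\xi_1,\xi_2,\xi_3)=s$, and
\[
 \tau=\{[a:b:c]:\ a,b,c>0,\ c<a+b,\ a<c+b/s,\ b<sa+c\}.
\]

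\emph{Inscribed triangle.} For $s\ge1$, $\tau$ contains $\{u<0<v<u+\log s\}$. In the chart $c=1$ this set is $x<1<y<sx$, and each defining inequality of $\tau$ follows at once. It is a right triangle of $(u,v)$-area $\frac12(\log s)^2$. For $s<1$, use $\{v<0<u<v-\log s\}$ instead. Hence $\operatorname{Area}_R(\tau)\ge\frac{\pi}{24}(\log T)^2$, and your Steps 1 and 2 finish the proof.

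This elementary route drops only the additive $\pi^2$ term. That term is irrelevant here, but the paper needs it later to get $t_0$ in Theorem~\ref{thm:area-weighted-volume}. Integrating the vertical slices exactly gives $\frac{\pi^2}{3}-\operatorname{Li}_2(-s)-\operatorname{Li}_2(-1/s)=\frac12\bigl(\pi^2+(\log s)^2\bigr)$, which recovers the Adeboye--Cooper formula itself.
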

\begin{proof}
By \cite[Proposition~0.3]{AC18} and monotonicity of Hilbert area,
\begin{equation}\label{eq:triangle-area-exact-lower}
 \operatorname{Area}_{H,\Omega_\rho}(\Delta)
 \ge\frac\pi{24}\bigl(\pi^2+(\log T(\Delta))^2\bigr).
\end{equation}
Here Busemann area is $\pi/3$ times the $p$-area of \cite{AC18} on
the triangle bounded by the three supporting lines; see
\cite[Lemma~2.2]{AC18}. The projected triangle has area at most $t$,
which proves the assertion.
\end{proof}

\subsection{Bers' constant for area-bounded subset}
\begin{lemma}[Bers' constant]\label{lem:bers}
Let $\lambda$ be as in Theorem~\ref{thm:intro-main}.
There is a constant $C_S\ge1$, depending only on the topological type,
such that every $[\rho]\in\mathcal{AH}^t(S)(\mathbf L)$ admits a pants
decomposition whose interior pants curves satisfy
\begin{equation}\label{eq:bers}
 \ell(\gamma)\le B(t):=C_S(\sqrt t+\lambda).
\end{equation}
In particular $B(t)$ also bounds every exterior total-root boundary length.
\end{lemma}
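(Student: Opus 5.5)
We follow the strategy of \cite{BPS12}: a Bers-type constant comes from an area bound plus a thick--thin argument, carried out for the Hilbert metric of $\Omega_\rho$. We work on the closed convex $\mathbb{RP}^2$ surface $\Sigma=\Omega_\rho/d\rho(\pi_1)$ when $m>0$ (or $\Omega_\rho/\rho(\pi_1)$ when $m=0$). The copy of $S_{g,m}$ in $\Sigma$ is bounded by closed Hilbert geodesics of Hilbert length $\ell(\alpha)/2\le\lambda/2$. Its Hilbert--Busemann area is $A(\rho)\le t$. The Hilbert length of a closed curve is $\ell/2$, so it suffices to produce a pants decomposition of the copy of $S_{g,m}$ whose curves have Hilbert length $\le C(\sqrt t+\lambda)$. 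The boundary curves already satisfy this.

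\textbf{Step 1 (area of balls).} By Benz\'ecri compactness (Remark~\ref{rem:proper-densities}) there is a universal $c_0>0$ such that every Hilbert ball of radius $1$ in any properly convex domain has Busemann area at least $c_0$. The set of pointed properly convex domains modulo $\mathrm{PGL}_3$ is compact, and the area of the unit ball is continuous and positive on it.

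\textbf{Step 2 (inductive curve selection).} We cut the surface inductively, as in Bers' original argument and \cite[Sec.~3]{BPS12}. Let $\Sigma_0$ be the copy of $S_{g,m}$. Suppose we have chosen disjoint simple closed geodesics $\gamma_1,\dots,\gamma_k$ with lengths $\le L_k$. Let $Y$ be a complementary component that is not a pair of pants; its boundary length is at most $(m+2k)\max(L_k,\lambda)$. We look for a nonperipheral essential simple closed curve in $Y$ of length $\le L_{k+1}:=C'(\sqrt t+\lambda+L_k)$.

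Consider the set of points $x\in Y$ at distance $\ge1$ from $\partial Y$ whose injectivity radius exceeds $r$. Fix a maximal $1$-separated family of such points. By Step~1 each point contributes area $\ge c_0$, so the family has at most $t/c_0$ points. Hence the $r$-thick part of $Y$ is covered by at most $t/c_0$ balls of radius about $r$.

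If the thick part were connected and no short nonperipheral loop existed, a connectivity/diameter argument would bound the diameter of $Y$ relative to its boundary by $O(r\,t/c_0+\text{boundary length})$. Then some nonperipheral simple loop would have length at most twice that diameter plus the boundary length. To get the stated $\sqrt t$ dependence rather than $t$, we choose instead a shortest nonperipheral arc or loop and use a coarea argument. The area of a collar of width $w$ around a geodesic of length $\ell$ in the Hilbert metric is $\gtrsim \ell w$, again by comparability of the Busemann density to a Riemannian density (the Blaschke metric, \cite[Prop.~3.4]{BH13}). This is where $\sqrt t$ arises: the length bound becomes $\sqrt{t}$ up to constants plus boundary terms. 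Since there are at most $3g-3+m$ steps, induction gives $B(t)=C_S(\sqrt t+\lambda)$.

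\textbf{Main obstacle.} The key step is the isoperimetric estimate in non-Riemannian Hilbert geometry: lower bounds for areas of embedded collars and disks in terms of length. We would try first to transfer everything to the Blaschke metric, which is uniformly bi-Lipschitz to the Hilbert metric with universal constants. Alternatively we would use Gromov-type systolic and isoperimetric inequalities for Finsler surfaces, which hold up to universal constants. We would then argue with the Riemannian Bers/Buser technique, where the bound $\ell\lesssim\sqrt{\mathrm{Area}}+\text{boundary}$ is standard. One must also check that short curves found in $Y$ are essential and nonperipheral in $S_{g,m}$ and not just in $\Sigma$. This follows because $\Sigma_0$ is bounded by geodesics, so curves in $Y$ can be tightened to geodesics staying in $\Sigma_0$.
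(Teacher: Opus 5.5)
Your final paragraph already contains the paper's proof. The paper passes to the Blaschke metric $g_B$, which \cite[Proposition 3.4]{BH13} makes uniformly comparable to the Hilbert metric, so its area is $\lesssim t$ and its boundary lengths are $\lesssim\lambda$. It then applies the Riemannian pants-decomposition bound of \cite{BPS12}, and finally replaces the curves by Hilbert geodesics using $\ell=2\ell_H$.

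The argument you actually write out in Steps~1--2 has a gap at the $\sqrt t$ step. The ingredient you call ``standard'' is where all the content lies. The thick--thin count gives only a bound linear in $t$, as you note. The proposed repair is asserted, not argued. A collar estimate $\operatorname{Area}\gtrsim \ell w$ around a shortest nonperipheral curve $\gamma\subset Y$ gives $\ell(\gamma)\lesssim\sqrt t$ only if the collar is embedded in $Y$ up to width comparable to $\ell(\gamma)$. In the relative step this can fail in two ways:
\begin{itemize}
\item $\gamma$ runs close to a long component of $\partial Y$;
\item the surgeries produced by a non-embedded collar yield only peripheral curves.
\end{itemize}
Handling this is the nontrivial part of the induction. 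Moreover, the bound $\ell\lesssim\sqrt{\operatorname{Area}}+\operatorname{length}(\partial Y)$ for a variable-curvature metric such as $g_B$ does not follow from Buser's argument, which relies on constant curvature $-1$ (exponential growth of collars). It is what \cite{BPS12} prove for arbitrary Riemannian metrics.

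To close the gap, replace Step~2 by the citation, as the paper does:
\begin{itemize}
\item For $m=0$, use \cite[Proposition 6.3]{BPS12}.
\item For $m>0$, use the boundary version in \cite[Proof of Theorem 6.10]{BPS12}. It holds under a normalization, so the paper multiplies $g_B$-lengths by a factor $s$ with $s^{-1}\le C(\sqrt t+\lambda)$. This makes the boundary lengths at most $1$ and the area at most $-2\pi\chi(S)$. It then enlarges the metric in the interior to bring the area up to $-2\pi\chi(S)$; this changes no boundary length and shortens no curve.
\end{itemize}
Scaling back by $s^{-1}$ gives the homogeneous bound $C_S(\sqrt t+\lambda)$. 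With this in place, your Benz\'ecri ball-area estimate and the induction on $L_k$ are unnecessary. Your points that $\partial\Sigma_0$ is geodesic and that short curves can be tightened inside $\Sigma_0$ are fine.
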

\begin{proof}
Let $g_B$ be the Blaschke metric restricted from the canonical
domain. By \cite[Proposition 3.4]{BH13}, it is uniformly comparable
to the Hilbert metric. Thus its area is bounded by a universal
multiple of $t$, and, when $m>0$, its boundary lengths are bounded
by a universal multiple of $\lambda$.

Applying \cite[Proposition 6.3]{BPS12} when $m=0$, and the rescaled
boundary version proved in \cite[Proof of Theorem 6.10]{BPS12} when $m>0$,
gives a pants decomposition whose $g_B$-lengths are at most
$C_S(\sqrt t+\lambda)$.
For the boundary case, after rescaling to make the boundary
lengths at most $1$ and the area at most $-2\pi\chi(S)$,
the area can be increased to $-2\pi\chi(S)$ by enlarging the
metric in the interior, without changing boundary lengths
or decreasing curve lengths.

Replace the pants curves by their Hilbert geodesic representatives.
The metric comparison and $\ell=2\ell_H$ give the required bound
after enlarging $C_S$.
\end{proof}

\subsection{Flows along a simple closed curve}
\label{section:srf}
In \cite{G86}, Goldman introduced flows along a nonperipheral simple closed
curve $\gamma$. Let $(v_1,v_2,v_3)$ be eigenvectors of $\rho(\gamma)$, with
eigenvalues $\lambda_1>\lambda_2>\lambda_3>0$. For $u+v+w=0$ put
\[
 g_r=\operatorname{diag}(e^{ur},e^{vr},e^{wr})
\]
in this eigenbasis. If $\gamma$ separates $S$ into $S_1,S_2$, use the
amalgamated product decomposition to leave $\rho|_{\pi_1(S_1)}$ fixed and
conjugate $\rho|_{\pi_1(S_2)}$ by $g_r$. If $\gamma$ is nonseparating,
leave the representation of the cut surface fixed and multiply the
holonomy of its stable letter by $g_r$. These commuting flows descend
to the representation variety.

We denote $\theta_1,\theta_2$ the simple-root twist flows generated by
\[
 H_1=\operatorname{diag}(2,-1,-1)/3,\qquad
 H_2=\operatorname{diag}(1,1,-2)/3.
\]
By \cite{SWZ20}, their Hamiltonian functions are
\begin{equation}\label{eq:hamiltonians}
 \mu_1=\log\lambda_1=\frac{2\ell_1+\ell_2}{3},\qquad
 \mu_2=-\log\lambda_3=\frac{\ell_1+2\ell_2}{3}.
\end{equation}
The generators of Hamiltonian flows for $\ell_1$ and $\ell_2$ are instead $2H_1-H_2$ and $-H_1+2H_2$.
The \emph{closed-bulging flow} is generated by $H_2-H_1$ and has matrix
\[
 O_r=\operatorname{diag}(e^{-r/3},e^{2r/3},e^{-r/3}).
\]
Thus it changes $(\theta_1,\theta_2)$ by $(-r,r)$.

\subsection{Area control of closed bulging}\label{subsec:closed-bulging}

\begin{figure}[htbp]
\centering\includegraphics[width=0.8\textwidth]{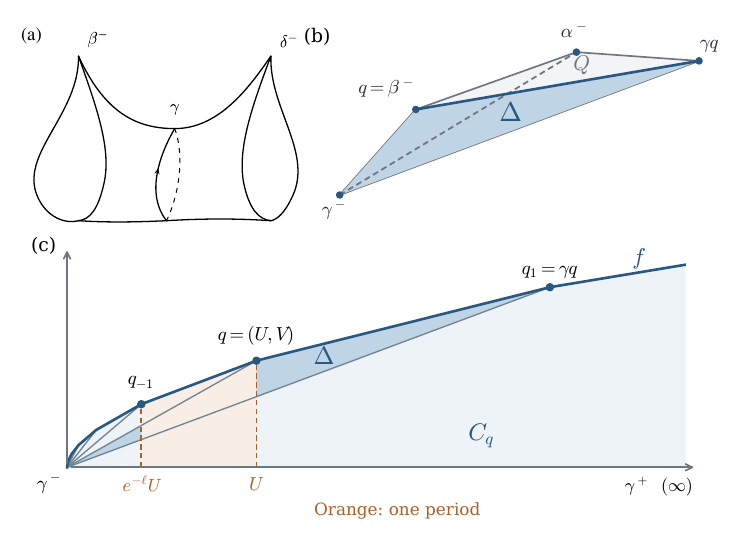}
\caption{\small (a) The two trouser legs adjacent to the pants curve $\gamma$. (b) The quadrilateral $Q$ is fundamental domain of a pair of pants $(\alpha,\beta,\gamma)$, containing a trouser leg $\Delta=(\gamma^-,q,\gamma q)$.
(c) The translates of $\Delta$ fill $C_q$ below the polygonal graph $f$.
Blue highlights $\Delta$; orange marks the other fundamental domain
$e^{-\ell}U<x<U$, $0<y<f(x)$ of the trouser leg by cutting along the perpendicular line from $\beta^-$ to $\gamma$. The point $\gamma^+$
lies at infinity.}\label{fig:closedbul}
\end{figure}

Fix an interior pants curve $\gamma$ and abbreviate
$\ell_i=\ell_i(\gamma)$ and $\ell=\ell(\gamma)=\ell_1+\ell_2$.
Firstly, we choose an affine chart $\{(x,y)\}\cong \{(x,y,1)\}$ with respect to the eigenvectors $(v_1,v_2,v_3)$ of $\gamma$ such that
\[
 \gamma(x,y)=(e^\ell x,e^{\ell_2}y).
\]
As in Figure~\ref{fig:closedbul}(c), we normalize two flags $\xi_\rho(\gamma^-)$, $\xi_\rho(\gamma^+)$ with $\xi_\rho^1(\gamma^-)=(0,0)$, $\xi_\rho^2(\gamma^-)$ being the axis $y=0$, $\xi_\rho^1(\gamma^+)=(\infty,0)$ and $\xi_\rho^2(\gamma^+)$ being the vertical axis at infinity. Then we only have one more vertex to normalize up to $\rm{PGL}_3(\mathbb{R})$ action. Choose the two trouser legs
adjacent to $\gamma$ in Figure~\ref{fig:closedbul}(a) with ideal vertices $\beta^-$ and $\delta^-$. If two boundary components of one pair of pants are identified with $\gamma$, use
the trouser leg adjacent to each boundary component. The left trouser leg is contained in some pair of pants $(\alpha,\beta,\gamma)$ as in Figure \ref{fig:pants-triangulation}. The flip of diagonal resulted in \ref{fig:closedbul}(b) provides a fundamental domain $\Delta:=(\gamma^-,\beta^-,\gamma \beta^-)$ of left trouser leg.
Normalize $\xi_\rho^1(\delta^-)=(1,-1)$ and write the one above $y=0$ as $q=\beta^-=(U,V)$,
with $U,V>0$. 
As in Figure~\ref{fig:closedbul}(c), the translates $\gamma^k\Delta$, $k\in \mathbb{Z}$ tile
$C_q:=\{x>0,\ 0<y<f(x)\}$. The piecewise linear function $f$ joins the points $\gamma^kq=(e^{k\ell}U,e^{k\ell_2}V)$.
Define
\begin{equation}\label{eq:geometric-bulge}
 \beta_\gamma(\rho)=\log V-\frac{\ell_2}{\ell}\log U.
\end{equation}
This expression is unchanged when either point is replaced by its
image under a power of $\gamma$. Under the closed-bulging flow along $\gamma$,
$\beta_\gamma(\rho_r)=\beta_\gamma(\rho)+r$.

\begin{samepage}
\begin{proposition}[Area bound for closed bulging]\label{prop:closed-bulge}
If $A(\rho)\le t$, the quantity $\beta_\gamma$ defined in
\eqref{eq:geometric-bulge} satisfies
\begin{equation}\label{eq:closed-bulge-bound}
 |\beta_\gamma(\rho)|\le \ell+\frac{12t}{\pi \ell}.
\end{equation}
In particular, if $\rho$ and its closed-bulging deformation $\rho_r$ both
have area at most $t$, then
\[
 |r|\le 2\ell+\frac{24t}{\pi \ell}.
\]
\end{proposition}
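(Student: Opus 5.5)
We bound the area of the trouser-leg region $C_q/\langle\gamma\rangle$ from below by a quantity that grows linearly in $|\beta_\gamma|$, using the explicit normalization of Section~\ref{subsec:closed-bulging}. The affine chart has $\xi^1_\rho(\gamma^-)=(0,0)$ and $\xi^1_\rho(\gamma^+)$ at infinity in the $x$-direction, with $\gamma(x,y)=(e^\ell x,e^{\ell_2}y)$. The convex domain $\Omega_\rho$ contains the orbit points $\gamma^k q=(e^{k\ell}U,e^{k\ell_2}V)$ and the point $\xi^1_\rho(\delta^-)=(1,-1)$, together with their $\gamma$-translates $(e^{k\ell},-e^{k\ell_2})$. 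By convexity $\Omega_\rho$ contains the region between the two polygonal graphs: $f$ above $y=0$ and the analogous graph $-g$ through the points $(e^{k\ell},-e^{k\ell_2})$. The trouser leg $\Delta$ projects injectively into $S_{g,m}$ (a fundamental domain), so $\operatorname{Area}_{H,\Omega_\rho}(\Delta)\le A(\rho)\le t$.

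The key step is to compare $\Omega_\rho$ with a larger, explicit convex domain $\Omega'$, using monotonicity of Hilbert--Busemann area (larger domain gives smaller area). For $\Omega'$ we take the region bounded by $y=0$'s supporting data. More concretely, we use the supporting lines at $\gamma^{\pm}$ given by the flags, namely the line $y=0$ at $(0,0)$ and the vertical line at infinity. Positivity of the flags also gives that $\Omega_\rho$ lies in the half-strip region cut out by the tangent lines $\xi^2_\rho(\gamma^k q)$.

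By $\gamma$-equivariance, substitute $(x,y)=(e^{s\ell}X, e^{s\ell_2}Y)$ and rescale so that the orbit of $q$ becomes the "straight" curve $y=e^{\beta_\gamma}x^{\ell_2/\ell}$, and the lower orbit becomes $y=-x^{\ell_2/\ell}$. The quantity $\beta_\gamma$ then measures the vertical asymmetry between the upper and lower orbits. Using the one-dimensional cross-ratio form of the Hilbert norm from Definition~\ref{def:busemann-area} along vertical lines, the Busemann density at a point $(x,y)$ with $0<y<f(x)$ is bounded below by a constant times $\frac{1}{y}\cdot\frac{1}{x}$. This comes from comparing with the product of two one-dimensional Hilbert metrics, vertical with endpoints $-g(x)$ and at most the upper boundary, and horizontal toward $0$ and $\infty$.

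Integrating over a fundamental domain $e^{-\ell}U<x<U$, we bound the area from below, up to explicit constants, by
\[
\int_{e^{-\ell}U}^{U}\int_{e^{-\ell}g(x)}^{f(x)}\frac{dy\,dx}{xy}\approx \ell\,\bigl(\beta_\gamma-\ell\bigr)_+.
\]
The $\ell$ correction comes from the $e^{\ell_2}$ oscillation between consecutive vertices of the polygonal graphs. A symmetric argument, exchanging the roles of the two trouser legs ($\beta^-$ and $\delta^-$), handles $\beta_\gamma<0$. This yields $\frac{\pi}{12}\ell(|\beta_\gamma|-\ell)\le t$, which is \eqref{eq:closed-bulge-bound}.

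The main obstacle is getting the correct constant $\pi/12$. For it we would instead use the triangle lower bound \eqref{eq:triangle-area-exact-lower} of \cite{AC18}. We decompose $C_q$ and its mirror into the ideal triangles $(\gamma^-,\gamma^kq,\gamma^{k+1}q)$ and the triangles opposite, and express their triple ratios in terms of $U,V,\ell_1,\ell_2$. Then $\log T$ of the flipped triangles is roughly $\beta_\gamma$ up to $O(\ell)$, and summing the bound over the one triangle per leg gives $\frac{\pi}{24}(\log T)^2$. Whether the quadratic bound or the linear-in-$\ell$ bound is the right route depends on this explicit computation; we expect the linear version to emerge by summing over the roughly $|\beta_\gamma|/\ell$ triangles that a large shear forces into a fundamental domain.

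The final claim is immediate. The closed-bulging flow shifts $\beta_\gamma$ by $r$ and preserves $\ell$, so $|r|\le|\beta_\gamma(\rho_r)|+|\beta_\gamma(\rho)|\le 2\ell+\frac{24t}{\pi\ell}$.
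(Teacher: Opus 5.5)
Your overall strategy (monotonicity of Busemann area against a larger explicit domain, integration over a $\gamma$-fundamental domain of the trouser leg, a symmetry for the sign of $\beta_\gamma$) is the paper's. However, the central estimate has a genuine gap.

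\emph{The density bound fails.} The claimed lower bound of the density by a constant times $\frac{1}{xy}$ on $\{0<y<f(x)\}$ is false. The line $y=0$ is the axis of $\gamma$. Its half $\{x>0\}$ lies in the interior of $\Omega_\rho$, and the domain extends below it to the boundary point $(1,-1)$. So $y=0$ is not a supporting line. In the chart where $\gamma=\operatorname{diag}(e^{\ell},e^{\ell_2},1)$, one has $\xi_\rho^2(\gamma^-)=\langle v_3,v_2\rangle=\{x=0\}$, and the invariant curves $y=c\,x^{\ell_2/\ell}$ are tangent to it at the origin. If your bound held, $\int_0^{f(x)}dy/y=\infty$ would give the fundamental domain infinite area, contradicting $A(\rho)\le t$. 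The cutoff $e^{-\ell}g(x)$ in your integral is unjustified.

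\emph{The comparison goes the wrong way.} The only information you use from the $\delta^-$ side is that $\Omega_\rho$ contains the lower polygonal region. Containment of a region gives only \emph{upper} bounds on the density. A lower bound needs $\Omega_\rho$ to be \emph{contained} in an explicit domain. Also, controlling the Hilbert norm only in the horizontal and vertical directions would not bound $\operatorname{Leb}(B_x(1))$ in any case.

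\emph{The \cite{AC18} fallback cannot work as sketched.} The triangles $(\gamma^-,\gamma^kq,\gamma^{k+1}q)$ are $\gamma$-translates of one triangle, which already tiles a fundamental domain. Its triple ratio is invariant under the closed-bulging flow, since that flow moves one side of $\gamma$ by a matrix commuting with $\rho(\gamma)$ and fixing $\xi_\rho(\gamma^{\pm})$. So no bound on it controls $\beta_\gamma$, which is a gluing invariant comparing $q$ with $\delta^-$.

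The missing idea is to confine $\Omega_\rho$ using a supporting line on the \emph{other} side of $\gamma$.
\begin{itemize}
\item The tangent line at $\delta^-=(1,-1)$ is $y+Ex+F=0$ with $E,F>0$ and $E+F=1$. Together with $x=0$ and the line at infinity (the tangent at $\gamma^+$), it bounds the triangle $\Delta_-=\{x>0,\ y+Ex+F>0\}\supset\Omega_\rho$.
\item On a triangle the Busemann density is explicit, because in logarithmic coordinates the Hilbert unit ball is a hexagon of Euclidean area $12$. Thus $dA_{\Delta_-}=\frac{\pi}{12}\frac{dx\,dy}{x(y+Ex+F)}$, and monotonicity gives $dA_{\Omega_\rho}\ge dA_{\Delta_-}$. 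This yields the constant $\pi/12$ directly. The vertical scale is the distance $y+Ex+F$ to the lower tangent line, not $y$.
\item Replace $q$ by some $\gamma^kq$ so that $1\le U\le e^{\ell}$, and integrate over $e^{-\ell}U<x<U$, $0<y<f(x)$. There $f(x)\ge e^{-\ell_2}V$ and $Ex+F\le U$. Since $e^{-\ell_2}V/U=\exp\bigl(\beta_\gamma-\tfrac{\ell_1}{\ell}\log U-\ell_2\bigr)$, this gives $t\ge\frac{\pi}{12}\ell\log\bigl(1+e^{-\ell_2}V/U\bigr)\ge\frac{\pi}{12}\ell\log\bigl(1+e^{\beta_\gamma-\ell}\bigr)$.
\item The map $(x,y)\mapsto(x/U,-y/V)$ exchanges the two sides and replaces $\beta_\gamma$ by $-\beta_\gamma$. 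Then $\log(1+e^{s})\ge s$ finishes the bound.
\end{itemize}
Your deduction of the bound on $|r|$ from the first inequality is correct and matches the paper.
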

\end{samepage}
\begin{proof}
By cutting along the perpendicular line from $\beta^-$ to $\gamma$, we get another fundamental domain $e^{-\ell}U<x<U$, $0<y<f(x)$  of the left trouser leg.
Its area in $\Omega_\rho$ is at most $A(\rho)$.
For the opposite boundary orientation, the same argument uses the
triangle $(\gamma^+,q,\gamma q)$. This construction applies on both
sides of $\gamma$.

Replacing $q$ by $\gamma^kq$ for a suitable integer $k$, assume
$1\le U\le e^\ell$.
Let the tangent line
at $(1,-1)$ be $y+Ex+F=0$, where $E,F>0$ and $E+F=1$. Together with
the tangents at $\gamma^-,\gamma^+$ it bounds the outer triangle
\[
 \Omega_\rho\subset\Delta_-:=\{x>0,\ y+Ex+F>0\}.
\]
In the coordinates $(x,y+Ex+F)$ its Hilbert--Busemann density is
\[
 dA_{\Delta_-}=\frac\pi{12}\frac{dx\,dy}{x(y+Ex+F)}.
\]
Here the unit ball in logarithmic coordinates has Euclidean area $12$.
Monotonicity gives
$dA_{\Omega_\rho}\ge dA_{\Delta_-}$. On the polygonal segment from
$(e^{-\ell}U,e^{-\ell_2}V)$ to $(U,V)$,
\[
 f(x)\ge e^{-\ell_2}V,\qquad Ex+F\le EU+F\le U.
\]
By the area comparison for this fundamental domain,
\begin{align*}
 A(\rho)&\ge\frac\pi{12}\int_{e^{-\ell}U}^{U}
        \log\left(1+\frac{f(x)}{Ex+F}\right)\frac{dx}{x}\\
 &\ge\frac\pi{12}\ell\log(1+e^{\beta_\gamma-\ell}),
\end{align*}
where
$e^{-\ell_2}V/U=\exp(\beta_\gamma-(\ell_1/\ell)\log U-\ell_2)\ge e^{\beta_\gamma-\ell}$.
Exchanging the two sides by $(x,y)\mapsto(x/U,-y/V)$ replaces
$\beta_\gamma$ by $-\beta_\gamma$. Therefore
\[
 A(\rho)\ge\frac\pi{12}\ell\log(1+e^{|\beta_\gamma|-\ell})
 \ge\frac\pi{12}\ell(|\beta_\gamma|-\ell),
\]
which proves \eqref{eq:closed-bulge-bound}. The time estimate follows
from $\beta_\gamma(\rho_r)-\beta_\gamma(\rho)=r$.
\end{proof}

\section{Goldman symplectic volume form}
\label{sec:goldman-volume-form}

In this section, using the generalized Darboux coordinate system obtained in
\cite{SWZ20,SZ25}, we express the Goldman symplectic volume form on
$\operatorname{Pos}_3(S_{g,m})(\mathbf L)$ in a natural way.

\subsection{Atiyah--Bott--Goldman symplectic form}For a closed surface $S$ and a compact Lie group $G$,
Atiyah and Bott \cite{AB83} used de Rham cohomology to construct
a natural symplectic form $\omega$ on $\operatorname{Hom}(\pi_1(S),G)/G$. Later on,
Goldman \cite{G84} generalized the symplectic form $\omega$ to noncompact
Lie groups using group cohomology and showed that $\omega$ is a multiple of
the Weil--Petersson symplectic form on the Teichm\"uller space of $S$.
We call $\omega$ the Goldman symplectic form for short.
The Goldman symplectic form has been extended to the case where the
topological surface $S$ has finitely many boundary components with fixed
monodromy conjugacy classes in \cite{AM95,GHJW97} and references therein,
even with marked points on the boundary in \cite{FR98}.
Throughout this section we use the trace normalization
$B(U,V)=\operatorname{tr}(UV)$ on $\mathfrak{sl}_3(\mathbb R)$, as in
\cite[Section~2.5]{SZ25}.

There is a specific natural formula for the Weil--Petersson symplectic
form $\omega_{\mathrm{WP}}$ on Teichm\"uller space. Let
$\mathcal T(S_{g,m})(L_1,\ldots,L_m)$ be the Teichm\"uller space with fixed
boundary lengths. Given a pants decomposition $\mathbb{P}$ of $S_{g,m}$,
with interior pants curves $\mathcal{P}$ and transverse arcs to these
curves, $\mathcal T(S_{g,m})(L_1,\ldots,L_m)$ can be parameterized by the
hyperbolic length functions $l(\gamma)$ and twist functions
$\vartheta(\gamma)$, for $\gamma\in\mathcal{P}$, called the
Fenchel--Nielsen coordinates. In \cite{Wol82,Wol83}, Wolpert provided an
explicit description of the Weil--Petersson symplectic form in terms of
these coordinates, called Wolpert's Magic Formula:
\begin{equation}\label{eq:wolpert}
 \omega_{\mathrm{WP}}
 =\sum_{\gamma\in\mathcal{P}} dl(\gamma)\wedge d\vartheta(\gamma).
\end{equation}
The above formula is crucial in \cite{Mir07a} for computing the volume of
the moduli space
\[
 \mathcal M_{g,m}(L_1,\ldots,L_m):=
 \mathcal T(S_{g,m})(L_1,\ldots,L_m)/\operatorname{Mod}(S_{g,m})
\]
of Riemann surfaces with fixed boundary lengths with respect to
$\omega_{\mathrm{WP}}$.

\subsection{Generalized Wolpert formula}
We use the generalized Wolpert formula
\cite[Corollary~7.11]{SZ25} in the following coordinates.

We specify the ideal triangulation $\mathcal T$ subordinate to a pants
decomposition $\mathbb{P}$ of $S_{g,m}$. Fix an auxiliary hyperbolic
structure $\rho_h$ and orient the $3g-3+m$ interior pants curves in
$\mathcal{P}$. Cutting along their geodesic representatives gives the
$2g-2+m$ pairs of pants in $\mathbb{P}$. For each $P\in\mathbb{P}$, choose
peripheral group elements $\alpha,\beta,\gamma\in\pi_1(P)$ such that
$\alpha\gamma\beta=\operatorname{Id}$ and $P$ lies to the right of
$\alpha,\beta,\gamma$. The inclusion of $P$ into $S_{g,m}$ induces an
inclusion of $\pi_1(P)$ into $\pi_1(S_{g,m})$, so we view these elements as
elements of $\pi_1(S_{g,m})$. Denote by $\delta^+$ and $\delta^-$ the
attracting and repelling fixed points of $\delta\in\pi_1(S_{g,m})$.
The natural projection from the universal cover $\widetilde S_{g,m}$ to
$S_{g,m}$ is denoted by $\pi$. Then $\pi\{\alpha^-,\beta^-\}$ is the
simple geodesic spiralling towards $\alpha$ and $\beta$ opposite to their
respective orientations. In fact, the three simple geodesics
$\pi\{\alpha^-,\beta^-\}$, $\pi\{\beta^-,\gamma^-\}$ and
$\pi\{\gamma^-,\alpha^-\}$ cut $P$ into two ideal triangles
$\pi\{\alpha^-,\beta^-,\gamma^-\}$ and
$\pi\{\alpha^-,\gamma^-,\gamma\cdot\beta^-\}$. The ideal triangulation is
\[
 \mathcal T=\mathcal{P}\cup\bigcup_{P\in\mathbb{P}}
 \bigl\{\pi\{\alpha^-,\beta^-\},\pi\{\beta^-,\gamma^-\},
              \pi\{\gamma^-,\alpha^-\}\bigr\}.
\]

\begin{figure}[htbp]
 \centering
 \includegraphics[width=.6\textwidth]{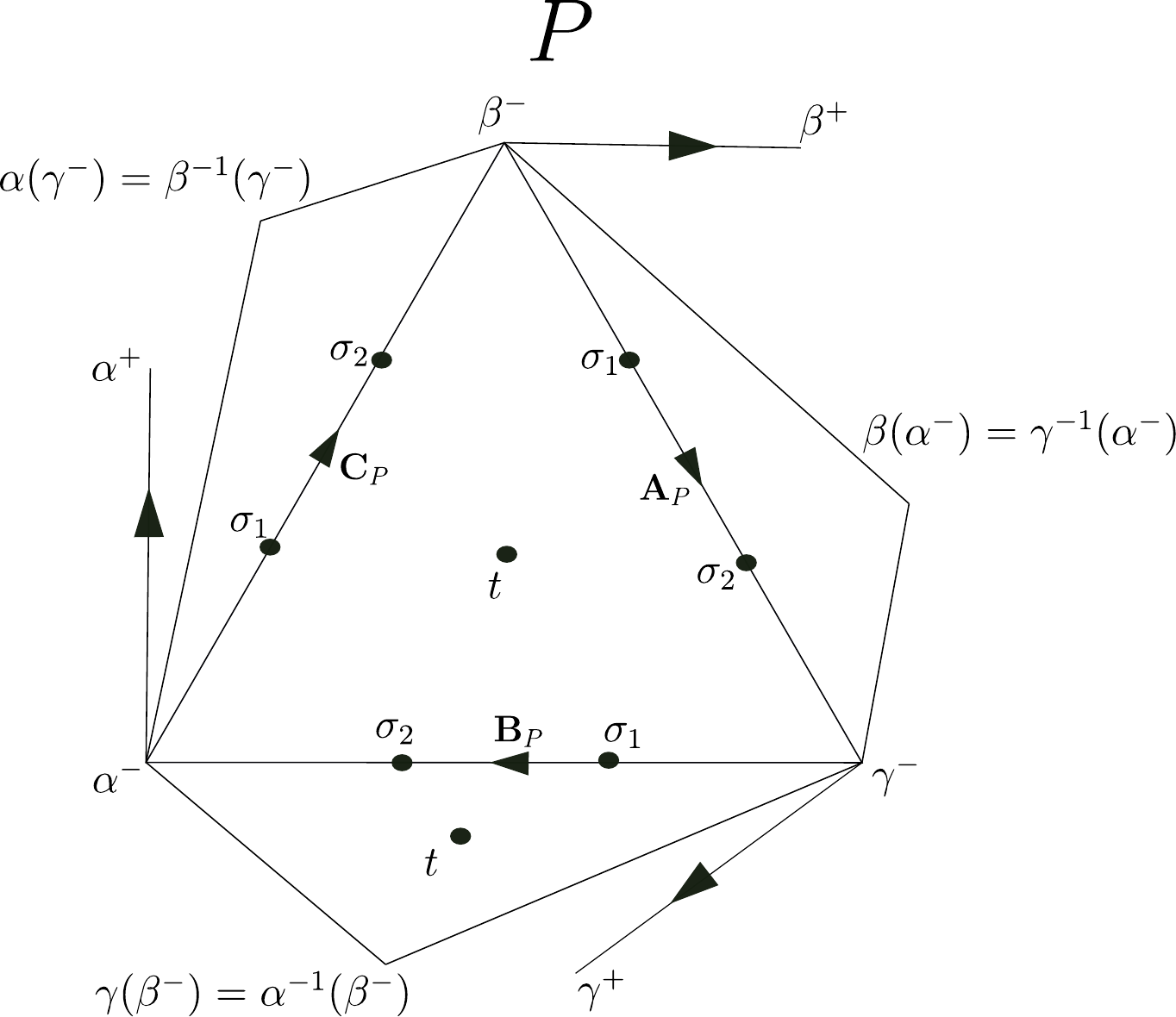}
 \caption{\small $(\gamma^-,\alpha^-,\gamma\cdot\beta^-)$ and
 $(\gamma^-,\beta^-,\alpha^-)$ form a lift of the pair of pants $P$
 with the marking $\alpha\gamma\beta=\operatorname{Id}$.}
 \label{fig:pants-triangulation}
\end{figure}

Let $C_P:=\pi(\alpha^-,\beta^-)$, $A_P:=\pi(\beta^-,\gamma^-)$ and
$B_P:=\pi(\gamma^-,\alpha^-)$. As in
Figure~\ref{fig:pants-triangulation},
$(\gamma^-,\alpha^-,\gamma\cdot\beta^-)$ and
$(\gamma^-,\beta^-,\alpha^-)$ are two adjacent anticlockwise ordered
ideal triangles in the universal cover with common edge
$(\gamma^-,\alpha^-)$. For any
$\rho\in\operatorname{Pos}_3(S_{g,m})(\mathbf L)$, recall the canonical equivariant map $\xi_\rho$. Recall Definition \ref{def:edge-functions} for the edge functions, for
$i=1,2$,
\[
 D_i(B_P)=D_i\bigl(\xi_\rho(\gamma^-),\xi_\rho(\alpha^-),
                  \xi_\rho(\gamma\cdot\beta^-),\xi_\rho(\beta^-)\bigr).
\]
Similarly,
\begin{align*}
 D_i(C_P)&=D_i\bigl(\xi_\rho(\alpha^-),\xi_\rho(\beta^-),
                  \xi_\rho(\alpha\cdot\gamma^-),\xi_\rho(\gamma^-)\bigr),\\
 D_i(A_P)&=D_i\bigl(\xi_\rho(\beta^-),\xi_\rho(\gamma^-),
                  \xi_\rho(\beta\cdot\alpha^-),\xi_\rho(\alpha^-)\bigr).
\end{align*}
Recall Definition \ref{defn:tripleratio} for the triple ratios. Let $\Delta_P:=\pi(\alpha^-,\gamma^-,\beta^-)$ and
$\Delta'_P:=\pi(\alpha^-,\gamma\cdot\beta^-,\gamma^-)$. 
\begin{align*}
 T(\Delta_P)&=T\bigl(\xi_\rho(\alpha^-),\xi_\rho(\gamma^-),
                    \xi_\rho(\beta^-)\bigr),\\
 T(\Delta'_P)&=T\bigl(\xi_\rho(\alpha^-),\xi_\rho(\gamma\cdot\beta^-),
                     \xi_\rho(\gamma^-)\bigr).
\end{align*}

\begin{notation}\label{not:shears}
Using the logarithmic invariants of
Notation~\ref{notation:surface-invariants}, define the internal pants
coordinates
\[
 X_P=\frac1{12}\sum_{e\in\{A_P,B_P,C_P\}}
                  \bigl(\sigma_2(e)-\sigma_1(e)\bigr),\qquad
 Y_P=-t(\Delta_P)+t(\Delta'_P).
\]
\end{notation}

The boundary holonomy identities of \cite{BD14} give the following.
\begin{lemma}\label{lem:pants-boundary-identities}
\begin{align*}
 \ell_1(\alpha)&=\sigma_1(C_P)+\sigma_2(B_P),&
 \ell_2(\alpha)&=\sigma_2(C_P)+t(\Delta_P)+\sigma_1(B_P)+t(\Delta'_P),\\
 \ell_1(\beta)&=\sigma_1(A_P)+\sigma_2(C_P),&
 \ell_2(\beta)&=\sigma_2(A_P)+t(\Delta_P)+\sigma_1(C_P)+t(\Delta'_P),\\
 \ell_1(\gamma)&=\sigma_1(B_P)+\sigma_2(A_P),&
 \ell_2(\gamma)&=\sigma_2(B_P)+t(\Delta_P)+\sigma_1(A_P)+t(\Delta'_P).
\end{align*}
\end{lemma}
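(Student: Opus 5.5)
The plan is to compute each boundary eigenvalue ratio directly from the Fock--Goncharov coordinates, via the standard ``snake'' or holonomy description of the monodromy around a peripheral curve. This is the method of \cite{BD14}, and the argument is the same for all three curves.

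First, fix the boundary component $\alpha$ of $P$. In the universal cover, the edges of $\mathcal T$ issuing from the fixed point $\alpha^-$ are, up to the action of $\langle\alpha\rangle$, exactly two edges: the lifts of $C_P$ (from $\alpha^-$ to $\beta^-$) and of $B_P$ (from $\gamma^-$ to $\alpha^-$). The ideal triangles with a vertex at $\alpha^-$ form a fan, and modulo $\alpha$ this fan consists of $\Delta_P$ and $\Delta'_P$. Since $\xi_\rho(\alpha^-)$ is the attracting/repelling flag of $\rho(\alpha)$ with basis given by eigenvectors, I will normalize $\xi_\rho(\alpha^-)$ as the standard flag. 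The other vertices of the fan form a $\rho(\alpha)$-invariant sequence of flags $F_k$ accumulating at $\xi_\rho(\alpha^+)$.

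Next I apply Proposition~\ref{prop:fg-positive-atlas} together with the geometric meaning of the coordinates (Figure~\ref{fig:edge-functions}). Crossing a single triangle of the fan multiplies the projective action on the flag at $\alpha^-$ by an elementary matrix. This matrix is determined by $T$ of that triangle and by the two edge functions adjacent to $\alpha^-$ on the edge being crossed.

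Concretely, the induced action on the line $\xi^2_\rho(\alpha^-)$ and on the pencil of lines through $\xi^1_\rho(\alpha^-)$ is projective. The map $F_k\mapsto F_{k+1}$ acts on these two $\mathbb{RP}^1$'s as follows:
\begin{itemize}
\item on the pencil of lines through $\xi^1_\rho(\alpha^-)$, it is multiplication by the product of the edge parameters $D_1$ at the $\alpha^-$ end;
\item on the line $\xi^2_\rho(\alpha^-)$, it is multiplication by the product of the $D_2$-type parameters times the triple ratios of the fan triangles.
\end{itemize}
This is the content of the cross-ratio interpretations $D_1=-\mathrm{CR}(\dots)$ in Figure~\ref{fig:edge-functions}. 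I will justify it by iterating those interpretations one triangle at a time.

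Going once around $\alpha$ composes these steps. The translation part is therefore $e^{\sigma_1(C_P)+\sigma_2(B_P)}$ on the pencil. On the second factor it is $e^{\sigma_2(C_P)+\sigma_1(B_P)+t(\Delta_P)+t(\Delta'_P)}$, with orientation conventions chosen so that the $\alpha^-$ endpoint of $C_P$ carries index $1$ and the $\alpha^-$ endpoint of $B_P$ carries index $2$, as dictated by $D_1(X,Y,Z,W)=D_2(Y,X,W,Z)$. Since $\rho(\alpha)$ fixes the flag at $\alpha^-$, it acts on the pencil with multiplier $\lambda_1/\lambda_2$ (or its inverse) and on $\xi^2$ with multiplier $\lambda_2/\lambda_3$. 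Taking logarithms gives the two formulas for $\alpha$.

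Finally, I obtain the formulas for $\beta$ and $\gamma$ from those for $\alpha$ by the cyclic relabeling $(\alpha,\beta,\gamma)\to(\beta,\gamma,\alpha)$, which sends $(C_P,B_P)$ to $(A_P,C_P)$ and $(B_P,A_P)$ respectively. This uses $\alpha\gamma\beta=\mathrm{Id}$ and the symmetry of $T$ under cyclic permutation. The triangles $\Delta_P,\Delta'_P$ are each incident to all three cusps, which is why both triple ratios appear in every $\ell_2$.

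The main obstacle is the bookkeeping of signs and orientations. I need to check which eigenvalue ratio, $\lambda_1/\lambda_2$ rather than its inverse, appears, given that $P$ lies to the right and the flag at $\alpha^-$ uses the reversed eigenbasis as in Definition~\ref{def:canonical-map}. I also need to confirm that the spiraling direction makes each edge's $\alpha^-$-end index the stated one. To pin down the signs, I would first verify everything on the $3$-Fuchsian locus, where $T=1$ and $\sigma_1=\sigma_2$ equal the hyperbolic shears. There both equations reduce to the classical statement that the boundary length is the sum of the shears.
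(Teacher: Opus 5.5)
Your plan is correct and follows essentially the paper's route. The paper simply invokes the boundary-holonomy (closed-leaf length) identities of \cite{BD14}, and your fan computation at $\alpha^-$ is the computation behind that citation: the $D_1$-factors in the pencil through $\xi^1_\rho(\alpha^-)$ give $\lambda_1/\lambda_2$, the far-end edge factors times the triple ratios of $\Delta_P,\Delta'_P$ on the line $\xi^2_\rho(\alpha^-)$ give $\lambda_2/\lambda_3$, and cyclic relabeling handles $\beta$ and $\gamma$. One caution: on $\xi^2_\rho(\alpha^-)$ the monomial factor is not a single-step cross-ratio. The step cross-ratio there carries an extra factor $(1+T_{k-1})/(1+T_k)$, which only telescopes away over a full period. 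Also, the $3$-Fuchsian check fixes only the overall sign (there $\sigma_1=\sigma_2$ and $t=0$), so the index and triple-ratio placement must come from the pencil-versus-line argument, as you already have it.
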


Before recalling the symplectic formula, we use
Lemma~\ref{lem:pants-boundary-identities} to control the internal bulging
coordinate of a pair of pants by its area and boundary lengths. All areas
in the following argument are measured in the fixed canonical domain
$\Omega_\rho$ of Definition~3.1. In particular, after cutting the surface
into pairs of pants, we retain the ambient Hilbert metric.

\begin{proposition}[Area bound for internal pants bulging]
\label{prop:internal-pants-area}
Let $P$ be a geodesic pair of pants in the above decomposition, and set
\[
 a_P:=\operatorname{Area}_{H,\Omega_\rho}(P),\qquad
 \Lambda_P:=\sum_{\delta\in\{\alpha,\beta,\gamma\}}\ell(\delta).
\]
Then the internal coordinates of Notation~\ref{not:shears} satisfy
\begin{equation}\label{eq:pants-XY-area}
 |X_P|\le\sqrt{\frac{3a_P}{\pi}}+\frac{\Lambda_P}{6},
 \qquad |Y_P|\le\sqrt{\frac{48a_P}{\pi}}.
\end{equation}
\end{proposition}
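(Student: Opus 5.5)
The $Y_P$ bound comes from the triangle estimate, and the $X_P$ bound from the boundary identities of Lemma~\ref{lem:pants-boundary-identities} plus a triangle-type area estimate.

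\emph{The $Y_P$ bound.} The two ideal triangles $\Delta_P$ and $\Delta'_P$ project to disjoint regions of $P$, so
\[
 \operatorname{Area}(\Delta_P)+\operatorname{Area}(\Delta'_P)\le a_P .
\]
We apply the lower bound \eqref{eq:triangle-area-exact-lower} from the proof of Proposition~\ref{prop:triangle-bound} to each triangle and drop the $\pi^2$ terms. This gives
\[
 \frac{\pi}{24}\bigl(t(\Delta_P)^2+t(\Delta'_P)^2\bigr)\le a_P .
\]
By Cauchy--Schwarz, $(t(\Delta'_P)-t(\Delta_P))^2\le 2\bigl(t(\Delta_P)^2+t(\Delta'_P)^2\bigr)\le 48a_P/\pi$, which is exactly the claimed bound on $|Y_P|$.

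\emph{Reducing $X_P$ to triangle ratios.} Sum the three $\ell_2$ identities of Lemma~\ref{lem:pants-boundary-identities} and subtract the three $\ell_1$ identities. Each edge $e$ contributes $\sigma_1(e)+\sigma_2(e)$ to both sums, so these cancel. The triangle terms enter only through the $\ell_2$ identities, each as $t(\Delta_P)+t(\Delta'_P)$. Hence
\[
 \sum_{\delta}\bigl(\ell_2(\delta)-\ell_1(\delta)\bigr)=3\bigl(t(\Delta_P)+t(\Delta'_P)\bigr).
\]
The difference combination $\sigma_2-\sigma_1$ does not appear here, so $X_P$ is not determined by the lengths and the triangle ratios alone. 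We must obtain a genuine area control on $\sum_e(\sigma_2(e)-\sigma_1(e))$.

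\emph{The $X_P$ bound: plan.} I would repeat the mechanism of Proposition~\ref{prop:closed-bulge}, but for a triangle adjacent to an edge $e$. Normalize the quadrilateral of flags around $e$ as in Figure~\ref{fig:edge-functions}. The quantity $\sigma_2(e)-\sigma_1(e)$ measures how far the opposite vertex is pushed toward one side in log coordinates. One then bounds the Hilbert--Busemann area of the pants region by the area with respect to the outer triangle, whose density is of the form $\frac{\pi}{12}\frac{dx\,dy}{xy}$ in log coordinates.

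The main obstacle is to get the precise form
\[
 a_P\ge \frac{\pi}{3}\Bigl(|X_P|-\frac{\Lambda_P}{6}\Bigr)^2 .
\]
This is quadratic, like the triangle estimate of \cite{AC18}, rather than linear as in Proposition~\ref{prop:closed-bulge}. The quadratic form suggests using a \cite{AC18}-type estimate on the triangle formed by the three supporting lines, after re-triangulating. I would first try flipping to the triangulation in which a triple ratio equals $e^{\pm 12X_P}$ up to factors $e^{O(\Lambda_P)}$. Then I would apply \eqref{eq:triangle-area-exact-lower}, and absorb the boundary-length error using $\ell_i(\delta)\le\ell(\delta)$. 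The constant $\Lambda_P/6$ should come out of that absorption.
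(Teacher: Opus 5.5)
Your bound on $|Y_P|$ is correct and is exactly the paper's argument. The gap is in $X_P$. You leave it as a plan, and its central step does not work as stated.

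Flipping does not produce a triangle whose triple ratio is $e^{\pm 12X_P}$ up to $e^{O(\Lambda_P)}$. Flip the edge $e_i$ inside the quadrilateral $Q_i$ formed by its two adjacent triangles; $Q_i$ is itself a fundamental domain of $P$. The two new triangles have log triple ratios $u_i=-\sigma_1(e_i)+O(|r_i|)$ and $v_i=-\sigma_2(e_i)+O(|r_i|)$. Here $r_i:=\sigma_1(e_i)+\sigma_2(e_i)+S_P$ with $S_P:=t(\Delta_P)+t(\Delta'_P)$, and $r_i$ is controlled by boundary lengths (see below). So the flipped triangles only see the shears of the one flipped edge. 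Nothing suggests a triangulation in which a single triple ratio sees all three edges of $X_P$, and you do not need one. Such a triangle would in any case give $|X_P|\le\frac1{12}\sqrt{24a_P/\pi}+O(\Lambda_P)$, not the stated $\sqrt{3a_P/\pi}$, so your plan was not matched to the statement.

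The paper works edge by edge. It proves $|\sigma_1(e_i)-\sigma_2(e_i)|\le\sqrt{48a_P/\pi}+|r_i|$ for each $i$, using $a_P$ separately for each $Q_i$. Summing gives $\frac1{12}\cdot 3\sqrt{48a_P/\pi}=\sqrt{3a_P/\pi}$.

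What you are missing is the error control in the flip. Let $A,D$ be the triple ratios of the two triangles of $Q_i$, let $B=e^{\sigma_1(e_i)}$ and $C=e^{\sigma_2(e_i)}$, and let $T_i',T_i''$ be the flipped triple ratios. The flip formula gives
\[
 BT_i'=\frac{BD(1+C)+R(1+B)}{BD(1+C)+(1+B)},\qquad (BT_i')(CT_i'')=R:=ABCD .
\]
So $\eta_i=\log(BT_i')$ is a weighted average lying between $0$ and $r_i=\log R$. Hence
\[
 |\sigma_1(e_i)-\sigma_2(e_i)+u_i-v_i|=|2\eta_i-r_i|\le|r_i| .
\]
Proposition~\ref{prop:triangle-bound} applied to the two flipped triangles, plus Cauchy--Schwarz, then gives $|u_i-v_i|\le\sqrt{48a_P/\pi}$.

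For this to be useful you need more than the identity $\sum_\delta(\ell_2-\ell_1)=3S_P$ that you derived. Each $r_i$ must also be fixed by the boundary data. This follows from $\ell(\delta_i)=p_j+p_k+S_P$ with $p_k=\sigma_1(e_k)+\sigma_2(e_k)$, which gives $r_i=\tfrac12\bigl(\ell(\delta_j)+\ell(\delta_k)-\ell(\delta_i)+S_P\bigr)$. Using $|S_P|\le\Lambda_P/3$, this yields $|r_i|\le 2\Lambda_P/3$. The resulting term $\frac1{12}\sum_i|r_i|\le\Lambda_P/6$ is exactly where the constant in the statement comes from.
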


\begin{proof}
\emph{Step 1:}
Write
\[
 e_1:=A_P,\quad e_2:=B_P,\quad e_3:=C_P,\qquad
 (\delta_1,\delta_2,\delta_3):=(\alpha,\beta,\gamma),
\]
and put
\[
 p_i:=\sigma_1(e_i)+\sigma_2(e_i),\qquad
 S_P:=t(\Delta_P)+t(\Delta'_P).
\]
Summing the three first-root identities and the three second-root
identities in Lemma~\ref{lem:pants-boundary-identities} gives, respectively,
\[
 \sum_{i=1}^3\ell_1(\delta_i)=\sum_{i=1}^3p_i,\qquad
 \sum_{i=1}^3\ell_2(\delta_i)=\sum_{i=1}^3p_i+3S_P.
\]
Also, adding the two root identities at the boundary $\delta_i$ gives
$\ell(\delta_i)=p_j+p_k+S_P$, where $\{i,j,k\}=\{1,2,3\}$.
It follows that
\begin{equation}\label{eq:pants-fixed-combinations}
\begin{split}
 S_P&=\frac13\sum_{i=1}^3\bigl(\ell_2(\delta_i)-\ell_1(\delta_i)\bigr),\\
 p_i&=\frac{\ell(\delta_j)+\ell(\delta_k)-\ell(\delta_i)-S_P}{2},\\
 r_i:=p_i+S_P&=\frac{\ell(\delta_j)+\ell(\delta_k)-\ell(\delta_i)+S_P}{2}.
\end{split}
\end{equation}
Since the simple root lengths are positive, these formulas imply
\begin{equation}\label{eq:pants-r-bounds}
 |S_P|\le\frac{\Lambda_P}{3},\qquad
 |r_i|\le\frac12\left(\Lambda_P+\frac{\Lambda_P}{3}\right)
          =\frac{2\Lambda_P}{3}.
\end{equation}

\emph{Step 2:}
As in Figure \ref{fig:pants-triangulation}, two adjacent ideal
triangles of each ideal edge $e_i$ form the fundamental domain $Q_i$ of the pair of pants. For example, for $e_2=B_P$, two ideal triangles are
\[
 (\gamma^-,\beta^-,\alpha^-),\qquad
 (\gamma^-,\alpha^-,\gamma\cdot\beta^-).
\]

\begin{figure}[htbp]
 \centering
\includegraphics[width=.7\textwidth]{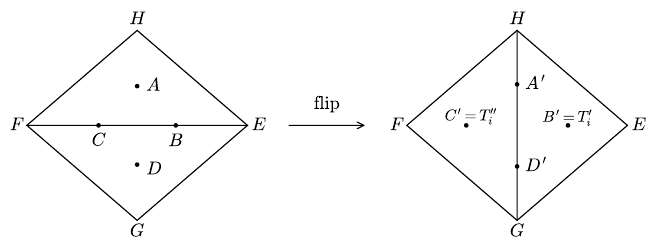}
 \caption{\small Change of coordinates under a flip.}
 \label{fig:flip}
\end{figure}

For each $Q_i$, following the conventions for the quadrilateral after Definition \ref{def:edge-functions}, as in Figure \ref{fig:flip}, let $E,H,F,G$ denote the supporting flags at its four vertices in cyclic order, with $EF$ the original diagonal, and set
\[
 A=T(E,H,F),\quad B=D_1(E,F,G,H),\quad
 C=D_2(E,F,G,H),\quad D=T(E,F,G).
\]
Then
\[ \log B=\sigma_1(e_i),\qquad \log C=\sigma_2(e_i),\qquad
 \log(AD)=S_P.
\]
For the displayed lift one has
$(E,H,F,G)=(\xi_\rho(\gamma^-),\xi_\rho(\beta^-),
\xi_\rho(\alpha^-),\xi_\rho(\gamma\cdot\beta^-))$; the other two leaves
are treated by cyclic relabelling.

Let us flip the diagnoal $EF$ into the other diagonal $HG$.
The two resulting ideal triangles have triple ratios
\[
 T_i'=T(E,H,G),\qquad T_i''=T(H,F,G),\qquad
 u_i=\log T_i',\quad v_i=\log T_i''.
\]
Then by the computation for the flip of diagonal in the quadrilateral after Definition \ref{def:edge-functions}, we get
\begin{equation}\label{eq:flipped-triangle-ratios}
 T_i'=D\frac{1+C+AC+ABC}{1+B+BD+BCD},\qquad
 T_i''=A\frac{1+B+BD+BCD}{1+C+AC+ABC}.
\end{equation}
Put $R=ABCD$, so $\log R=S_P+p_i=r_i$ by
\eqref{eq:pants-fixed-combinations}. Multiplying the first formula by $B$
and grouping its positive terms yields
\begin{equation}\label{eq:weighted-ratio}
 BT_i'=\frac{BD(1+C)+R(1+B)}{BD(1+C)+(1+B)},\qquad
 (BT_i')(CT_i'')=R.
\end{equation}
The first expression is a weighted average of $1$ and $R$, with strictly
positive weights. Hence, if $\eta_i:=\log(BT_i')$, then
\[
 \min(0,r_i)\le\eta_i\le\max(0,r_i),\qquad
 \sigma_1(e_i)+u_i=\eta_i,\qquad
 \sigma_2(e_i)+v_i=r_i-\eta_i.
\]
Subtracting the last two equalities gives
\begin{equation}\label{eq:shear-triangle-difference}
 |\sigma_1(e_i)-\sigma_2(e_i)+u_i-v_i|
 =|2\eta_i-r_i|\le |r_i|.
\end{equation}
\emph{Step 3:}
Applying 
Proposition~\ref{prop:triangle-bound} to the two triangles
of Step~2 gives
\[
 a_P\ge\frac{\pi}{24}(u_i^2+v_i^2).
\]
Consequently,
\[
 |u_i-v_i|
 \le\sqrt{2(u_i^2+v_i^2)}
 \le\sqrt{\frac{48a_P}{\pi}}.
\]
Combining this with \eqref{eq:shear-triangle-difference} gives the
individual edge estimate
\begin{equation}\label{eq:individual-shear-difference}
 |\sigma_1(e_i)-\sigma_2(e_i)|
        \le\sqrt{\frac{48a_P}{\pi}}+|r_i|.
\end{equation}
Finally, by \eqref{eq:pants-r-bounds},
\begin{align*}
 |X_P|&\le\frac1{12}\sum_{i=1}^3
                   |\sigma_1(e_i)-\sigma_2(e_i)|\\
 &\le\sqrt{\frac{3a_P}{\pi}}+\frac1{12}\sum_{i=1}^3|r_i|
 \le\sqrt{\frac{3a_P}{\pi}}+\frac{\Lambda_P}{6}.
\end{align*}
The bound $a_P$ is applied separately to each $Q_i$.

Since $Q_2$ is a fundamentdal domain of the pair of pants $P$, we have
\[
 a_P\ge\frac{\pi}{24}\bigl(t(\Delta_P)^2+t(\Delta'_P)^2\bigr).
\]
Hence
\[
 |Y_P|=|t(\Delta'_P)-t(\Delta_P)|
 \le\sqrt{2\bigl(t(\Delta_P)^2+t(\Delta'_P)^2\bigr)}
 \le\sqrt{\frac{48a_P}{\pi}},
\]
which proves the second inequality in \eqref{eq:pants-XY-area}.
\end{proof}

\subsection{Goldman symplectic volume form}

We are now ready to compute the Goldman symplectic volume form on
$\operatorname{Pos}_3(S_{g,m})(\mathbf L)$.

\begin{proposition}[\cite{SWZ20}]
\label{thm:generalized-wolpert}
For the fixed boundary conjugacy classes of the pair of pants $P$, the symplectic area density on the leaf associated to $P$ is
\[
 2\,|dX_P\,dY_P|,
\]
where $X_P,Y_P$ are as in Notation~\ref{not:shears}.
\end{proposition}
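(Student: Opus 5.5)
The plan is to compute the Goldman form restricted to the leaf of a single pair of pants with fixed boundary conjugacy classes, working in the Sun--Wienhard--Zhang coordinates. For a pair of pants $P$ the leaf is $2$-dimensional: $\dim\operatorname{Pos}_3(P)=16\cdot 0-16+6\cdot 3+\dots$, so the total parameter count is $8$ (two triangle invariants plus six edge invariants). The six boundary simple-root lengths are fixed, and they satisfy the linear relations of Lemma~\ref{lem:pants-boundary-identities}. By Proposition~\ref{prop:fg-positive-atlas}, the eight logarithmic invariants $(t(\Delta_P),t(\Delta'_P),\sigma_r(A_P),\sigma_r(B_P),\sigma_r(C_P))$ are global real-analytic coordinates on the framed pants space. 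Fixing the six boundary lengths cuts out a $2$-dimensional affine subspace. The functions $X_P,Y_P$ are linear in these coordinates and, by Step~1 of the proof of Proposition~\ref{prop:internal-pants-area}, independent of the fixed combinations $S_P,p_i$. Hence $(X_P,Y_P)$ are affine coordinates on the leaf.

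Next I would invoke the generalized Wolpert formula \cite[Corollary~7.11]{SZ25}, \cite{SWZ20}. In log-shear coordinates the Goldman form (with $B=\operatorname{tr}$) is the constant-coefficient Fock--Goncharov Poisson form; on the fixed-boundary leaf it inverts to a constant $2$-form $c\,dX_P\wedge dY_P$. The task is then to determine $c$. Using the FG Poisson bracket, with $\{t,\sigma\}$ entries $\pm1$ determined by the quiver of the triangulation, I would compute $\{X_P,Y_P\}$. The symplectic form on the leaf is the inverse of the bracket restricted to Casimir level sets, so $\omega=\{X_P,Y_P\}^{-1}dX_P\wedge dY_P$.

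In the computation, $Y_P=t(\Delta'_P)-t(\Delta_P)$, and each triangle variable brackets with the six edge variables on its sides with signs alternating in $r=1,2$. The combination $\sigma_2-\sigma_1$ summed over the three edges then gives $\{X_P,Y_P\}=\tfrac1{12}\cdot 6=\tfrac12$, which yields density $2|dX_P\,dY_P|$. The normalization $1/12$ in $X_P$ is presumably chosen exactly for this.

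The main obstacle is fixing conventions. These include the sign and scale of the FG bracket relative to Goldman's form with the trace pairing, and the factor relating the Poisson bivector to its inverse on a leaf, which is where factors of $2$ or $3$ enter from the Cartan matrix of $\mathfrak{sl}_3$. I would check the scaling against the $3$-Fuchsian or closed-curve case. There, $\omega(\theta_i,\cdot)=d\mu_i$ with $\mu_i$ as in \eqref{eq:hamiltonians}, and this pins down the constant. The final answer can then be taken from \cite{SWZ20}.
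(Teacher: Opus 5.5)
\emph{Your framework is reasonable, but the coefficient $2$ --- the only nontrivial content of the proposition --- is not established: your count is off by a factor of $2$, and the FG-to-Goldman conversion constant is left open.} For comparison, the paper gives no derivation at all. It simply quotes the pants part of the generalized Wolpert formula from \cite{SWZ20} (see also \cite[Corollary~7.11]{SZ25}).

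The parts of your setup that hold are these. The six boundary simple-root lengths are independent linear functions of the eight log-invariants. $(X_P,Y_P)$ restrict to affine coordinates on the two-dimensional leaf. The right check there is that $X_P,Y_P$ are not in the span of all six length functions, not merely that they are independent of $S_P,p_i$; it does hold. For a constant Poisson structure, $\omega=\{X_P,Y_P\}^{-1}dX_P\wedge dY_P$ on the leaf. Two minor slips: Proposition~\ref{prop:fg-positive-atlas} is about polygons, not the framed pants space, and your displayed dimension formula is garbled (the unrestricted pants space has dimension $8$).

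First, the count. Every edge $A_P,B_P,C_P$ bounds both $\Delta_P$ and $\Delta'_P$. In each of the three displayed edge functions, the triangle $(x_i,x_j,x_k)$ is a lift of $\Delta'_P$; for example $(\alpha^-,\beta^-,\alpha\gamma^-)=\alpha\cdot(\alpha^-,\gamma\beta^-,\gamma^-)$ by $\alpha\gamma\beta=\mathrm{Id}$, and $(\beta^-,\gamma^-,\beta\alpha^-)=\gamma^{-1}\cdot\Delta'_P$. Hence $\{t(\Delta'_P),\sigma_2(e)-\sigma_1(e)\}=\pm2$ with the same sign for all three edges, so $\{t(\Delta'_P),12X_P\}=\pm6$. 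Moreover, $t(\Delta_P)+t(\Delta'_P)=S_P$ is a function of the boundary data by Step~1 of the proof of Proposition~\ref{prop:internal-pants-area}, hence a Casimir. So $\{t(\Delta_P),X_P\}=-\{t(\Delta'_P),X_P\}$, and the two triangles reinforce in $Y_P=t(\Delta'_P)-t(\Delta_P)$. The result is $\{X_P,Y_P\}=\pm\tfrac{12}{12}=\pm1$ in the normalization you state, not $\tfrac12$; you effectively computed only $\{X_P,t(\Delta'_P)\}$. With ``Goldman $=$ FG'' your method therefore gives density $1$. The stated $2$ requires the Goldman bracket for $B(U,V)=\operatorname{tr}(UV)$ to be exactly half the FG bracket in your normalization, and you neither derive nor cite that factor.

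Second, your proposed calibration via $\omega(\theta_i,\cdot)=d\mu_i$ cannot be carried out on a single pants leaf. There is no interior curve, and the leaf meets the $3$-Fuchsian locus in at most one point. On a larger surface with the spiralling triangulation, the twist coordinates along $\gamma$ are not FG quiver variables. Calibrating would require their brackets with the triangle and edge coordinates, and that is precisely the content of \cite{SZ25}. So, like the paper, you must take the constant from \cite{SWZ20,SZ25}. The intermediate computation $\tfrac1{12}\cdot6=\tfrac12$ should be dropped rather than offered as support, since it reaches $2$ only through the miscount.
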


\begin{proposition}[Goldman symplectic volume form]
\label{prop:goldman-volume-form}
Let $\mathbb{P}$ be the set of pairs of pants and $\mathcal{P}$ its set
of interior pants curves. Use $X_P,Y_P$ from Notation~\ref{not:shears}
for $P\in\mathbb{P}$ and $\ell_{i,\gamma}=\ell_i(\gamma)$ for
$\gamma\in\mathcal{P}$. Recall the simple root twist coordinates $\theta_{1,\gamma},\theta_{2,\gamma}$ in
Section \ref{section:srf}. Put $p=|\mathbb{P}|=2g-2+m$,
$n=|\mathcal{P}|=3g-3+m$ and $d=p+2n=8g-8+3m$. The Goldman symplectic volume form is
\begin{equation}\label{eq:goldman-volume-density}
d\operatorname{Vol}:= \frac{|\omega^d|}{d!}
 =\frac{2^p}{3^n}
   \prod_{P\in\mathbb{P}}|dX_P\,dY_P|
   \prod_{\gamma\in\mathcal{P}}|d\ell_{1,\gamma}\,d\ell_{2,\gamma}\,d\theta_{1,\gamma}\,d\theta_{2,\gamma}|.
\end{equation}
\end{proposition}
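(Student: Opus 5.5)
We will derive \eqref{eq:goldman-volume-density} from the generalized Wolpert formula of \cite[Corollary~7.11]{SZ25}. The plan is to check that, in the coordinates adapted to $\mathbb P$, the Goldman form on the fixed-boundary leaf splits as an orthogonal sum. The pants part is a sum over $P\in\mathbb P$ of the area forms of Proposition~\ref{thm:generalized-wolpert}. The curve part is a sum over $\gamma\in\mathcal P$ of length--twist pairings. Concretely, we aim to write
\[
 \omega=\sum_{P\in\mathbb P}\omega_P+\sum_{\gamma\in\mathcal P}\omega_\gamma,
 \qquad \omega_P=\pm 2\,dX_P\wedge dY_P,
\]
with $\omega_\gamma$ a nondegenerate $2$-form in $(d\ell_{1,\gamma},d\ell_{2,\gamma},d\theta_{1,\gamma},d\theta_{2,\gamma})$. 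Each $\omega_\gamma$ comes from the Hamiltonian description in Section~\ref{section:srf}. The twist flows $\theta_{1,\gamma},\theta_{2,\gamma}$ generated by $H_1,H_2$ have Hamiltonians $\mu_1,\mu_2$ as in \eqref{eq:hamiltonians}. Hence
\[
 \omega_\gamma=d\mu_{1,\gamma}\wedge d\theta_{1,\gamma}+d\mu_{2,\gamma}\wedge d\theta_{2,\gamma}.
\]
The twist flows along distinct curves commute. They also preserve the internal pants coordinates $X_P,Y_P$ and all lengths. Together with \cite[Corollary~7.11]{SZ25}, this should give the absence of cross terms between blocks.

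Given this splitting, the volume form computation is a routine combinatorial step. Since all blocks are $2$-forms in disjoint sets of variables, they commute. Thus
\[
 \frac{\omega^d}{d!}=\prod_P\omega_P\wedge\prod_\gamma\frac{\omega_\gamma^2}{2}.
\]
The multinomial count is exact: $\omega^d/d!$ picks each $\omega_P$ once and each $\omega_\gamma$ twice, and this gives the factor $\omega_\gamma^2/2$. The pants blocks contribute $2^p\prod_P|dX_PdY_P|$. For each curve,
\[
 \frac{\omega_\gamma^2}{2}=d\mu_1\wedge d\theta_1\wedge d\mu_2\wedge d\theta_2
\]
up to sign. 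It remains to change variables from $(\mu_1,\mu_2)$ to $(\ell_1,\ell_2)$. By \eqref{eq:hamiltonians} the Jacobian is
\[
 \det\begin{pmatrix}2/3&1/3\\1/3&2/3\end{pmatrix}=\frac13 .
\]
So each $\gamma$ contributes $\frac13|d\ell_1 d\ell_2 d\theta_1 d\theta_2|$, and the product gives $2^p/3^n$. The dimension count $d=p+2n$ matches $2\cdot|\mathbb P|+4\cdot|\mathcal P|=2d$, which confirms that these are full coordinates on the leaf.

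The main obstacle is the block-diagonality, in particular the vanishing of the cross terms $dX_P\wedge d\theta_{i,\gamma}$ and $dY_P\wedge d\ell_{i,\gamma}$. The danger is that \cite{SZ25} writes $\omega$ in their own twist normalization, which may differ from ours by bulging shifts or by $\ell$-dependent corrections. Our approach is to compute Poisson brackets directly. We have $\{\mu_{i,\gamma},X_P\}=\{\mu_{i,\gamma},Y_P\}=0$, because $X_P,Y_P$ are invariant under the twist flows along $\gamma$. We have $\{\mu,\mu'\}=0$ by commutativity of the flows. The $\theta$'s are, by definition, the flow times, so $\{\theta_{i,\gamma},\mu_{j,\gamma'}\}$ is the identity pairing. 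Any remaining ambiguity in the choice of origin of $\theta$ only changes $\theta$ by functions of the other coordinates. Such shear changes are unimodular, so they leave the top-degree form $|\omega^d|$ unchanged even if $\omega$ itself acquires cross terms. This is the point we will rely on: we only need the density, not the form.
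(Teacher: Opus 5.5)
Your proposal is correct and follows essentially the paper's route: the pants factor $2^p\prod_{P}|dX_P\,dY_P|$ from Proposition~\ref{thm:generalized-wolpert}, the Hamiltonians $\mu_{1,\gamma},\mu_{2,\gamma}$ of the twist flows from \eqref{eq:hamiltonians}, and the Jacobian $\frac13$ that converts $d\mu_{1,\gamma}\wedge d\mu_{2,\gamma}$ into $d\ell_{1,\gamma}\wedge d\ell_{2,\gamma}$. Your multinomial count, and your remark that shears of the twist origin and cross terms without $d\theta$ do not change the top-degree density, only make explicit what the paper leaves implicit; like the paper, you take from \cite{SZ25} the fact that the pants part splits into independent blocks.
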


\begin{proof}
By Proposition~\ref{thm:generalized-wolpert}, the Goldman symplectic volume form contributed by the interior of pairs of pants is
$\prod_{P\in\mathbb{P}}2|dX_P\,dY_P|$. By \eqref{eq:hamiltonians}, the Hamiltonian functions for $\theta_{1,\gamma},\theta_{2,\gamma}$ are $\mu_{1,\gamma}$ and $\mu_{2,\gamma}$. Consequently, the Goldman symplectic volume form is
\[
 2^p\prod_{P\in\mathbb{P}}|dX_P\,dY_P|
 \prod_{\gamma\in\mathcal{P}}
    |d\mu_{1,\gamma}\,d\mu_{2,\gamma}\,d\theta_{1,\gamma}\,d\theta_{2,\gamma}|.
\]
For each $\gamma\in\mathcal{P}$,
\[
 d\mu_{1,\gamma}\wedge d\mu_{2,\gamma}
 =\det\begin{pmatrix}2/3&1/3\\1/3&2/3\end{pmatrix}
      d\ell_{1,\gamma}\wedge d\ell_{2,\gamma}=\frac13\,d\ell_{1,\gamma}\wedge d\ell_{2,\gamma}.
\]
This proves \eqref{eq:goldman-volume-density}.
\end{proof}

\section{Goldman symplectic volume of the area-bounded locus}
\label{sec:volume}
Let $S=S_{g,m}$, with $m\ge0$ and $2g-2+m>0$, and fix the positive
boundary simple-root lengths $\mathbf L$ when $m>0$. For $m=0$,
$\mathbf L$ is empty and $\lambda=0$. Retain $p,n,d$ from
Proposition~\ref{prop:goldman-volume-form}: there are $p$ pairs of pants
and $n$ interior pants curves, and the fixed-boundary symplectic leaf has real
dimension $2d$.
We compute with $d\operatorname{Vol}$
of Proposition~\ref{prop:goldman-volume-form} and set
\[
 W_{g,m}(t;\mathbf L)
 =\operatorname{Vol}\bigl(\mathcal{AH}^t(S)(\mathbf L)\bigr).
\]
All pieces obtained by cutting $S$ retain its ambient canonical Hilbert
metric. Retain $\lambda,C_S$ from Lemma~\ref{lem:bers}, and write
$B=B(t)=C_S(\sqrt t+\lambda)$.

\subsection{The internal pants factors}
For a pair of pants $P$, let $a_P$ be its ambient area and let
$\Lambda_P$ be the sum of its three total-root boundary lengths, counting
each boundary component of $P$ separately. Proposition~\ref{prop:internal-pants-area}
gives
\[
 |X_P|\le\sqrt{\frac{3a_P}{\pi}}+\frac{\Lambda_P}{6},\qquad
 |Y_P|\le\sqrt{\frac{48a_P}{\pi}}.
\]
Since $a_P\le t$, a containing rectangle in the $(X_P,Y_P)$-plane has
Lebesgue area
\begin{equation}
 4\left(\sqrt{\frac{3t}{\pi}}+\frac{\Lambda_P}{6}\right)
       \sqrt{\frac{48t}{\pi}}
 =\frac{48t}{\pi}+8\sqrt{\frac{t}{3\pi}}\,\Lambda_P.
 \label{eq:pants-rectangle}
\end{equation}
All three total-root boundary lengths are at most $B$, so this area is
bounded by
\begin{equation}\label{eq:uniform-pants}
 R(t,B):=\frac{48t}{\pi}
                      +8\sqrt3\,B\sqrt{\frac t\pi}.
\end{equation}

\subsection{The gluing factor and the Dehn-twist quotient}

Fix a marked pants decomposition $\mathbb P$ with interior pants curves $\mathcal P$. For an oriented $\gamma\in\mathcal P$, abbreviate
$\ell_i=\ell_i(\gamma)$ and $\ell=\ell_1+\ell_2$.
Keeping $(1,-1)$ fixed, the twist flow sends the point $q=(U,V)$
of Section~\ref{subsec:closed-bulging} to
\[
 \bigl(e^{\theta_{1,\gamma}+\theta_{2,\gamma}}U,
       e^{\theta_{2,\gamma}}V\bigr).
\]
Thus \eqref{eq:geometric-bulge} gives
\begin{equation}\label{eq:invariant-gluing}
 \beta_\gamma
 =\frac{\ell_1\theta_{2,\gamma}-\ell_2\theta_{1,\gamma}}{\ell}+c_\gamma,
 \qquad
 r_\gamma=\frac{\theta_{1,\gamma}+\theta_{2,\gamma}}{\ell},
\end{equation}
where $c_\gamma$ depends only on the marked structures on the pairs
of pants. This also applies when the two boundary
components identified with $\gamma$ belong to the same pair of pants.

A Dehn twist along $\gamma$ adds $(\ell_1,\ell_2)$ to
$(\theta_{1,\gamma},\theta_{2,\gamma})$, fixing $\beta_\gamma$ and
sending $r_\gamma$ to $r_\gamma+1$.
With the projective structures on the pairs of pants fixed, we have
\[
 |d\theta_{1,\gamma}\,d\theta_{2,\gamma}|
 =\ell\,|d\beta_\gamma\,dr_\gamma|.
\]
Dehn twists along the curves in $\mathcal P$ allow
$0\le r_\gamma<1$ for every $\gamma$ simultaneously.
With the structures on the pairs of pants fixed,
Proposition~\ref{prop:closed-bulge} places the area-bounded locus
inside the product of the intervals
$|\beta_\gamma|\le\ell+12t/(\pi\ell)$.
Integration in these coordinates is therefore bounded, for each
$\gamma$, by
\begin{equation}\label{eq:gluing-factor}
 \ell\int_0^1
   \int_{-\ell-12t/(\pi\ell)}^{\ell+12t/(\pi\ell)}
      d\beta_\gamma\,dr_\gamma
 =2\ell^2+\frac{24t}{\pi}.
\end{equation}

\subsection{Volume calculation}
Let $N_{g,m}$ be the number of mapping-class-group orbits of pants
decompositions with labeled exterior boundary components. These are
encoded by finitely many trivalent graphs with $p$ vertices, $n$ internal
edges and $m$ labeled legs. A coarse bound is
\[
 N_{g,m}\le\frac{(3p)!}{2^n n!}.
\]
This follows by labeling the $3p$ boundary components of the pairs
of pants, choosing the $m$ exterior components in order, and pairing
the remaining $2n$ components.

\begin{theorem}\label{thm:main}
For $B=C_S(\sqrt t+\lambda)$,
\begin{equation}\label{eq:explicit-volume}
 \begin{split}
 W_{g,m}(t;\mathbf L)\le N_{g,m}
 &\left(\frac{96t}{\pi}+16B\sqrt{\frac{3t}{\pi}}\right)^p\cdot\left(\frac{B^4}{6}+\frac{4tB^2}{\pi}\right)^n.
 \end{split}
\end{equation}
In particular,
\begin{equation}\label{eq:polynomial-volume}
 W_{g,m}(t;\mathbf L)\le C_{g,m}(t+\lambda^2)^d,
\end{equation}
where one may take
\begin{equation}\label{eq:volume-constant}
 C_{g,m}=N_{g,m}
 \left(\frac{96}{\pi}+16C_S\sqrt{\frac6\pi}\right)^p
 \left(\frac{2C_S^4}{3}+\frac{8C_S^2}{\pi}\right)^n.
\end{equation}
\end{theorem}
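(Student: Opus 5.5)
We cover the area-bounded locus by finitely many coordinate boxes, one for each mapping-class-group orbit of pants decompositions, and integrate the volume form of Proposition~\ref{prop:goldman-volume-form} over each box.

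\emph{Covering.} By Lemma~\ref{lem:bers}, every $[\rho]\in\mathcal{AH}^t(S)(\mathbf L)$ has a pants decomposition $\mathbb P$ whose interior curves satisfy $\ell(\gamma)\le B$. Up to the action of $\operatorname{Mod}(S)$, $\mathbb P$ is one of at most $N_{g,m}$ marked decompositions $\mathbb P_1,\dots,\mathbb P_{N}$. Hence $\mathcal{AH}^t(S)(\mathbf L)$ is covered by the images of the sets
\[
 K_j=\{\rho\in\operatorname{APos}^t_3(S)(\mathbf L):\ \ell(\gamma)\le B\ \ \forall\gamma\in\mathcal P_j\}
\]
taken modulo the stabilizer of $\mathbb P_j$. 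This stabilizer contains the Dehn twists along the curves of $\mathcal P_j$. It therefore suffices to bound the volume of a fundamental domain of $K_j$ for the Dehn-twist group, and then sum over $j$. This sum gives the factor $N_{g,m}$.

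\emph{Integration in the SWZ coordinates.} In the Darboux coordinates of $\mathbb P_j$, the volume form is
\[
 \frac{2^p}{3^n}\prod_P|dX_P\,dY_P|\prod_\gamma|d\ell_1\,d\ell_2\,d\theta_1\,d\theta_2|.
\]
We integrate iteratively, treating the factors as follows.
\begin{itemize}
\item For fixed pants structures and lengths, change $(\theta_{1,\gamma},\theta_{2,\gamma})$ to $(\beta_\gamma,r_\gamma)$ via \eqref{eq:invariant-gluing}. The Jacobian is $\ell$.
\item Use the Dehn twists to restrict to $0\le r_\gamma<1$.
\item Use Proposition~\ref{prop:closed-bulge} to restrict $\beta_\gamma$ to an interval of length $2\ell+24t/(\pi\ell)$. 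This gives the factor \eqref{eq:gluing-factor}, namely $2\ell^2+24t/\pi$.
\item Integrate over $(\ell_1,\ell_2)$ in the triangle $\ell_1,\ell_2>0$, $\ell_1+\ell_2\le B$. Since $\ell^2$ integrates to $B^4/4$ and the constant integrates to $B^2/2$, this yields $B^4/2+12tB^2/\pi$.
\item Multiply by the factor $1/3$ per curve. This gives exactly $B^4/6+4tB^2/\pi$.
\end{itemize}
For each pair of pants, the factor $2$ times the rectangle area $R(t,B)$ of \eqref{eq:uniform-pants} equals $96t/\pi+16B\sqrt{3t/\pi}$. The bounds on $X_P,Y_P$ depend only on $a_P\le t$ and $\Lambda_P\le 3B$ (using Lemma~\ref{lem:bers} for boundary lengths), so they are uniform in the other coordinates. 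The product region therefore contains the whole set, and Fubini yields \eqref{eq:explicit-volume}.

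\emph{Polynomial form.} Substitute $B=C_S(\sqrt t+\lambda)$ and use two elementary inequalities:
\[
 B^2\le 2C_S^2(t+\lambda^2),\qquad B\sqrt t\le C_S(t+\lambda\sqrt t)\le 2C_S(t+\lambda^2).
\]
Since $C_S\ge1$, each pants factor is then at most $(96/\pi+16C_S\sqrt{6/\pi})(t+\lambda^2)$, up to adjusting numerical constants. Likewise, each curve factor is at most
\[
 \Bigl(\tfrac{4C_S^4}{6}+\tfrac{8C_S^2}{\pi}\Bigr)(t+\lambda^2)^2,
\]
using $t\le t+\lambda^2$. The exponent is $p+2n=d$, which gives \eqref{eq:polynomial-volume} with the constant \eqref{eq:volume-constant}. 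The constant in the pants factor should be checked carefully, for instance via $B\sqrt{3t/\pi}\le C_S\sqrt{6/\pi}\,(t+\lambda^2)$, which follows from $(\sqrt t+\lambda)\sqrt t\le\sqrt2\,(t+\lambda^2)$.

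\emph{Main obstacle.} The delicate step is justifying that the iterated bounds define a product region in the coordinates. The $\beta_\gamma$ interval must be taken with the pants data $c_\gamma$ fixed, so the change of variables is a fibrewise shear. The restriction $0\le r_\gamma<1$ is simultaneous for all $\gamma$ because the twist flows commute. I also need to make sure the internal pants bounds still hold after cutting, which requires that $a_P$ be measured in the ambient domain $\Omega_\rho$. Finally, I expect non-injectivity of the covering to be harmless, since it only overcounts and we want an upper bound.
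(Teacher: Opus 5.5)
Your proposal is correct and follows essentially the same route as the paper. You cover $\mathcal{AH}^t(S)(\mathbf L)$ by the $N_{g,m}$ marked pants decompositions supplied by Lemma~\ref{lem:bers}, use Dehn twists to restrict to $0\le r_\gamma<1$, bound each gluing factor by $2\ell^2+24t/\pi$ via the Jacobian $\ell$ and Proposition~\ref{prop:closed-bulge}, bound each pants factor by $2R(t,B)$, integrate the lengths over the triangle $\ell_1+\ell_2\le B$, and then pass to \eqref{eq:polynomial-volume} with the constant \eqref{eq:volume-constant} using $B^2\le 2C_S^2(t+\lambda^2)$ and $B\sqrt t\le\sqrt2\,C_S(t+\lambda^2)$, exactly as the paper does.
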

\begin{proof}
Choose representatives $\mathbb{P}^{(1)},\ldots,\mathbb{P}^{(N_{g,m})}$ of the
mapping-class-group orbits of pants decompositions, with exterior boundary
components labeled, and denote their interior pants curves by
$\mathcal{P}^{(1)},\ldots,\mathcal{P}^{(N_{g,m})}$. Fix a bridge system for
each representative and let
\[
 \operatorname{pr}\colon\operatorname{Pos}_3(S)(\mathbf L)
                 \longrightarrow\mathcal H(S)(\mathbf L)
\]
be the quotient map. For each $j$, use the coordinates of
\eqref{eq:invariant-gluing} and define the measurable marked coordinate
region
\begin{equation}\label{eq:covering-regions}
 \begin{split}
 \mathcal F_j(t)=\{\rho\in\operatorname{Pos}_3(S)(\mathbf L):\;&
 A(\rho)\le t,\quad \ell^\rho(\gamma)\le B,\\
 &0\le r_\gamma(\rho)<1\quad\text{for every }\gamma\in\mathcal{P}^{(j)}\}.
 \end{split}
\end{equation}
We claim that
\begin{equation}\label{eq:finite-cover}
 \mathcal{AH}^t(S)(\mathbf L)
       =\bigcup_{j=1}^{N_{g,m}}\operatorname{pr}\bigl(\mathcal F_j(t)\bigr),
\end{equation}
which is basically due to the existence of Bers' constant.
Indeed, given $[\rho]\in\mathcal{AH}^t(S)(\mathbf L)$,
Lemma~\ref{lem:bers} supplies a
pants decomposition $\mathbb{P}_\rho$ whose total-root lengths along the pants curves are
at most $B$. Change the marking so that $\mathbb{P}_\rho$ becomes one
of the representatives $\mathbb{P}^{(j)}$. The resulting marked
representation $\rho'$ has the same area and the corresponding lengths $\ell(\gamma)$. Integer Dehn twists along these disjoint curves then place every
$r_\gamma$ in $[0,1)$, since each twist changes $r_\gamma$ by $1$ and preserves
the area and all lengths $\ell(\gamma)$, $\gamma\in\mathcal P^{(j)}$. Hence $[\rho]$ belongs to the right-hand
side of \eqref{eq:finite-cover}. The reverse inclusion follows directly
from the definition of $\mathcal F_j(t)$.

Since the mapping class group preserves the Goldman symplectic form, the quotient
map preserves the Goldman symplectic volume form away from orbifold singularities.
Then we have
\begin{equation}\label{eq:finite-cover-volume}
 \begin{split}
 W_{g,m}(t;\mathbf L)
 &\le\sum_{j=1}^{N_{g,m}}\operatorname{Vol}
                  \bigl(\operatorname{pr}(\mathcal F_j(t))\bigr)\\
 &\le\sum_{j=1}^{N_{g,m}}\int_{\mathcal F_j(t)}d\operatorname{Vol}.
 \end{split}
\end{equation}
We now bound each integral by the containing coordinate domain of
Sections~5.1--5.2.

For one pants curve $\gamma$, the remaining length integral in
\eqref{eq:gluing-factor} is
\begin{align}
 J(t,B)&=\int_{\substack{\ell_1,\ell_2>0\\\ell_1+\ell_2\le B}}
       \left(2(\ell_1+\ell_2)^2+\frac{24t}{\pi}\right)
                  \,d\ell_1\,d\ell_2\notag\\
 &=\int_0^B\int_0^\ell\left(2\ell^2+\frac{24t}{\pi}\right)
                  \,d\ell_1\,d\ell
   =\frac{B^4}{2}+\frac{12tB^2}{\pi}.
 \label{eq:cuff-integral}
\end{align}
Proposition~\ref{prop:goldman-volume-form} supplies the density factor
$2^p/3^n$. Applying \eqref{eq:uniform-pants} and
\eqref{eq:cuff-integral}, we have
\[
 W_{g,m}(t;\mathbf L)\le N_{g,m}\frac{2^p}{3^n}R(t,B)^pJ(t,B)^n,
\]
which is \eqref{eq:explicit-volume}. 

Since $B\le\sqrt2C_S\sqrt{t+\lambda^2}$,
\begin{align*}
 2R(t,B)&\le\left(\frac{96}{\pi}+16C_S\sqrt{\frac6\pi}\right)(t+\lambda^2),\\
 \frac{J(t,B)}3&\le\left(\frac{2C_S^4}{3}+\frac{8C_S^2}{\pi}\right)(t+\lambda^2)^2.
\end{align*}
Since $p+2n=d$, these inequalities prove
\eqref{eq:polynomial-volume} and \eqref{eq:volume-constant}.
\end{proof}

\subsection{Exponentially weighted Goldman volume}
\label{subsec:area-weighted-volume}

\begin{theorem}
\label{thm:area-weighted-volume}
For every $s>0$, the weighted volume
\[
 Z_{g,m}(s;\mathbf L)
 :=\int_{\mathcal H(S)(\mathbf L)}e^{-sA(\rho)}
             \,d\operatorname{Vol}(\rho)
\]
is strictly positive and finite. Set
 $t_0=\frac{p\pi^3}{12}.$
Then
\begin{equation}\label{eq:weighted-volume-bound}
 Z_{g,m}(s;\mathbf L)
 \le C_{g,m}e^{-st_0}
       \sum_{k=0}^{d}\binom{d}{k}
       (t_0+\lambda^2)^{d-k}\frac{k!}{s^k}.
\end{equation}
\end{theorem}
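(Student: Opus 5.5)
The plan is the layer-cake formula combined with a lower bound on the canonical area. First I would establish a uniform lower bound $A(\rho)\ge t_0=p\pi^3/12$ on all of $\mathcal H(S)(\mathbf L)$. Cut $S$ along the pants decomposition into $p$ pairs of pants. Each pair of pants $P$ is the union of the two ideal triangles $\Delta_P,\Delta'_P$, and these have disjoint interiors. Apply the exact lower bound \eqref{eq:triangle-area-exact-lower} from the proof of Proposition~\ref{prop:triangle-bound}. Each triangle then has Hilbert--Busemann area in $\Omega_\rho$ at least $\frac{\pi}{24}(\pi^2+t(\Delta)^2)\ge \pi^3/24$. Hence $a_P\ge\pi^3/12$, and summing over $P\in\mathbb P$ gives $A(\rho)\ge p\pi^3/12=t_0$. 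It follows that $W_{g,m}(t;\mathbf L)=0$ for $t<t_0$.

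Next comes the layer-cake identity. Since $A$ is continuous and its sublevel sets are Borel,
\[
 e^{-sA(\rho)}=\int_{A(\rho)}^\infty s e^{-st}\,dt .
\]
Tonelli's theorem then gives
\[
 Z_{g,m}(s;\mathbf L)=s\int_0^\infty e^{-st}W_{g,m}(t;\mathbf L)\,dt=s\int_{t_0}^\infty e^{-st}W_{g,m}(t;\mathbf L)\,dt .
\]
Into this I substitute Theorem~\ref{thm:main}, namely $W\le C_{g,m}(t+\lambda^2)^d$. I write $t=t_0+u$ and expand binomially:
\[
 (t_0+\lambda^2+u)^d=\sum_{k}\binom dk (t_0+\lambda^2)^{d-k}u^k .
\]
Using $s\int_0^\infty e^{-su}u^k\,du=k!/s^k$, this yields exactly \eqref{eq:weighted-volume-bound}. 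In particular $Z_{g,m}(s;\mathbf L)$ is finite.

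For strict positivity, I note that $\mathcal H(S)(\mathbf L)$ is nonempty; it contains, for instance, the $3$-Fuchsian points with the given boundary data, or the Hitchin representations described in Remark~\ref{rem:doubling}. The Goldman volume form is a nondegenerate top form on this leaf, so every nonempty open set has positive volume. The set $\{A<t\}$ is open and nonempty for $t$ large, because $A$ is continuous and finite. On that set the integrand $e^{-sA}$ is bounded below by $e^{-st}>0$, so $Z_{g,m}(s;\mathbf L)>0$.

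I expect the main obstacle to be the lower bound $A\ge t_0$. Two points need justification there. The first is that the triangles' areas are additive inside the ambient domain $\Omega_\rho$ and that the bound \eqref{eq:triangle-area-exact-lower} applies to the projected ideal triangles; this is the same monotonicity used in Proposition~\ref{prop:triangle-bound}. The second is that the $2p$ triangles cover $S$ up to measure zero, which holds because the spiralling leaves of the triangulation have zero area. If that covering is only known up to a null set, the inequality still holds, which is all that is needed. Everything else is a routine computation with the Gamma integral.
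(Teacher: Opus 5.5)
Your proposal is correct and follows the paper's proof essentially step for step. The paper uses the same lower bound $A(\rho)\ge t_0$ from the $2p$ ideal triangles, each of area at least $\pi^3/24$, the same layer-cake identity $Z_{g,m}(s;\mathbf L)=s\int_{t_0}^\infty e^{-st}W_{g,m}(t;\mathbf L)\,dt$, then Theorem~\ref{thm:main}, and the same binomial/Gamma-integral expansion. One small slip is in the positivity step: $3$-Fuchsian points lie in $\mathcal H(S)(\mathbf L)$ only when $\ell_1(\alpha)=\ell_2(\alpha)$ for every boundary component, so for general $\mathbf L$ you need another witness of nonemptiness (e.g.\ realizing the boundary data via the pants identities of Lemma~\ref{lem:pants-boundary-identities}), though the paper itself simply asserts nonemptiness.
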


\begin{proof}
By \eqref{eq:triangle-area-exact-lower},
$\operatorname{Area}_{H,\Omega_\rho}(\Delta)\ge\pi^3/24$
for every ideal triangle of a triangulation subordinate to a
pants decomposition. There are $2p$ such triangles with disjoint
projected interiors. Consequently,
\[
 A(\rho)\ge t_0,\qquad
 W_{g,m}(t;\mathbf L)=0\quad\text{for }0\le t<t_0.
\]

The distribution function of the push-forward of Goldman volume under
$A$ is $W_{g,m}(\,\cdot\,;\mathbf L)$.
The identity
\[
 e^{-sa}=s\int_a^\infty e^{-st}\,dt
\]
implies
\[
 \begin{split}
 Z_{g,m}(s;\mathbf L)
 &=s\int_0^\infty e^{-st}
       \operatorname{Vol}\bigl(\mathcal{AH}^t(S)(\mathbf L)\bigr)
       \,dt\\
 &=s\int_{t_0}^\infty
       e^{-st}W_{g,m}(t;\mathbf L)\,dt.
 \end{split}
\]
Applying \eqref{eq:polynomial-volume} and substituting
$u=t-t_0$, we obtain
\[
 \begin{split}
 Z_{g,m}(s;\mathbf L)
 &\le C_{g,m}s\int_{t_0}^\infty
                  e^{-st}(t+\lambda^2)^d\,dt\\
 &=C_{g,m}e^{-st_0}
       \sum_{k=0}^{d}\binom{d}{k}(t_0+\lambda^2)^{d-k}
       s\int_0^\infty e^{-su}u^k\,du\\
 &=C_{g,m}e^{-st_0}
       \sum_{k=0}^{d}\binom{d}{k}
       (t_0+\lambda^2)^{d-k}\frac{k!}{s^k}.
 \end{split}
\]
This proves finiteness and \eqref{eq:weighted-volume-bound}.
Finally, $\mathcal H(S)(\mathbf L)$ is nonempty and has nonzero Goldman measure.
Since the integrand is strictly positive everywhere,
$Z_{g,m}(s;\mathbf L)>0$.
\end{proof}

\begin{remark}[Area-weighted probability measures]
\label{rem:area-probability-measures}
For each $s>0$, Theorem~\ref{thm:area-weighted-volume} allows us
to define a Borel probability measure on $\mathcal H(S)(\mathbf L)$ by
\begin{equation}\label{eq:area-probability-measure}
 d\mu_{s,\mathbf L}(\rho)
 :=\frac{e^{-sA(\rho)}}{Z_{g,m}(s;\mathbf L)}
       \,d\operatorname{Vol}(\rho).
\end{equation}
Both the canonical area and Goldman volume are invariant under
changes of marking. Thus this family of probability measures
is intrinsically defined on the fixed-boundary moduli space.
\end{remark}

\section*{Acknowledgements}
Q. L. is supported by the National Key R\&D
Program of China No. 2022YFA1006600, the Fundamental Research Funds for the Central Universities, Nankai Zhide Foundation, and the Alexander von Humboldt Foundation. 
W. S. is supported by the NSFC grants 12371076 and 12631004.
 Z. S. is supported by the NSFC grant 12471068 and 12471071. Z. S. would like to thank Fran\c cois Labourie for suggesting the beautiful work of Maryam Mirzakhani to him in 2010. 

\section*{Declaration of AI use}
The authors formulated Proposition \ref{prop:closed-bulge} and Proposition \ref{prop:internal-pants-area} based on the geometric insights, and provided GPT-6 Astra with the key ideas for the proofs to work out the technical details. The authors refined and independently verified the resulting proofs. All mathematical statements and cited references in the manuscript have been independently checked by the authors, who take full responsibility for its content.


\begin{thebibliography}{99}

\bibitem[AB83]{AB83} Michael Atiyah and Raoul Bott,
\textit{The Yang--Mills equations over Riemann surfaces}, Philos. Trans. A \textbf{308} (1983), 523--615.



\bibitem[AC18]{AC18} Ilesanmi Adeboye and Daryl Cooper,
\textit{The area of convex projective surfaces and Fock--Goncharov coordinates}, Journal of Topology and Analysis (2018), 1--13.  	


\bibitem[AKKJ26]{AKKJ26}
Huiseong An, Heejae Kim, Mingook Kim, Seungjin Lee and Hongtaek Jung,
\emph{The Thick Part of the $\mathrm{PSL}_n(\mathbb{R})$-Hitchin-Riemann Moduli Space has Infinite Volume}, preprint (2026), arXiv:2607.06554.



\bibitem[AM95]{AM95} Alekseev, A. Yu, and A. Z. Malkin,
\textit{Symplectic structure of the moduli space of flat connection on a Riemann surface}, Communications in Mathematical Physics \textbf{169} (1995), no. 1, 99-119.

\bibitem[BPS12]{BPS12}
Florent Balacheff, Hugo Parlier, and St\'ephane Sabourau,
\emph{Short loop decompositions of surfaces and the geometry of Jacobians},
Geom. Funct. Anal. \textbf{22} (2012), no.~1, 37--73.  

\bibitem[B60]{B60}
Jean--Paul Benz\'ecri, \emph{Sur les vari\'et\'es localement affines et localement projectives}, Bull. Soc. Math. France, \textbf{88} (1960), 229--332. 



\bibitem[BD14]{BD14}
Francis Bonahon and Guillaume Dreyer, \emph{Parameterizing Hitchin components}, Duke Mathematical Journal \textbf{163} (2014), no.~15, 2935--2975. 

\bibitem[BH13]{BH13}
Yves Benoist, Dominique Hulin \emph{Cubic differentials and finite volume convex projective surfaces}, Geometry And Topology \textbf{17} (2013), 595--620.

\bibitem[Bon01]{Bon01}
Francis Bonahon, \emph{Geodesic laminations on surfaces}, Contemporary Mathematics \textbf{269} (2001), 1--38.


\bibitem[CB88]{CB88}
Andrew J Casson and Steven A Bleiler, \emph{Automorphisms of surfaces after Nielsen and Thurston}, vol. 9, Cambridge University Press, 1988.



\bibitem[CG93]{CG93}
Suhyoung Choi and William~M Goldman, \emph{Convex real projective structures on
  closed surfaces are closed}, Proceedings of the American Mathematical Society
  \textbf{118} (1993), no.~2, 657--661.

\bibitem[CJ23]{CJ23} Suhyoung Choi and Hongtaek Jung, 
\textit{The volumes of the Hitchin--Riemann moduli spaces are infinite}, preprint (2026), arXiv:2305.10093.


\bibitem[CJK21]{CJK21}
Suhyoung Choi, Hongtaek Jung, Hong Chan Kim, \emph{Symplectic coordinates on $\rm{PSL}_3(\mathbb{R})$-Hitchin components}, Pure and applied mathematics quarterly \textbf{16}(2021), no. 5, 1321--1386. arXiv:1901.04651


\bibitem[FG06]{FG06}
Vladimir Fock and Alexander Goncharov, \emph{Moduli spaces of local systems and higher Teichm{\"u}ller theory}, Publications Math\'ematiques de l'Institut des Hautes {\'E}tudes Scientifiques \textbf{103} (2006), no.~1, 1--211. 
  
\bibitem[FG07]{FG07} 
Vladimir Fock and Alexander Goncharov,
\emph{Moduli spaces of convex projective structures on surfaces}, Adv. Math \textbf{208} (2007),  no.~1, 249--273. 

\bibitem[FR98]{FR98} V. V. Fock and A. A. Rosly,
\textit{Poisson structure on moduli of flat connections on Riemann surfaces and $r$-matrix}, Am. Math. Soc. Transl. \textbf{191}. math/9802054 (1998): 67-86.


\bibitem[G84]{G84} William M. Goldman,
\textit{The Symplectic Nature of Fundamental Groups of Surfaces}, Adv. Math. \textbf{54} (1984), 200--225.

\bibitem[G86]{G86} William M. Goldman,
\textit{Invariant functions on Lie groups and Hamiltonian flows of surface group representations}, Invent. Math. \textbf{85} (1986), 263--302.


\bibitem[G90]{G90}
William~M. Goldman, \emph{Convex real projective structures on compact surfaces}, Journal of Differential Geometry
  \textbf{31} (1990), no.~3, 791--845.


\bibitem[GHJW97]{GHJW97} K. Guruprasad, J. Huebschmann, L. Jeffrey, and A. Weinstein,
\textit{Group systems, groupoids, and moduli spaces of parabolic bundles}, Duke Mathematical Journal \textbf{89} (1997), no. 1, 377-412.

\bibitem[GS26]{GS26} Alexander B. Goncharov and Zhe Sun,
\textit{Exponential volumes of moduli spaces of hyperbolic surfaces}, Invent. Math. \textbf{246} (2026), 317--404. 


\bibitem[Hit92]{Hit92}
Nigel J. Hitchin, \emph{Lie groups and Teichm\"uller space}, Topology \textbf{31}(1992), no.~3, 449-473.


\bibitem[HS23]{HS23}
Y. Huang, Z. Sun, \emph{McShane identities for Higher Teichm\"uller theory and the Goncharov-Shen potential}, Mem. Amer. Math. Soc. \textbf{286} (2023), no. 1422. 


\bibitem[Kim99]{Kim99}
Hong Chan Kim, \emph{The symplectic global coordinates on the moduli space of real projective structures}, J. Differential Geom. \textbf{53} (1999), no.~2, 359--401.


\bibitem[Ku53]{Ku53}
Nicolaas Kuiper, \emph{Sur les surfaces localement affines}, Colloq. Inst. G\'eom. Diff., Strasbourg,
CNRS, 1953, 79--87.

\bibitem[Ku54]{Ku54}
Nicolaas Kuiper, \emph{On convex locally projective spaces}, Convegno Int. Geometria Diff., Italy, 1954, 200--213.


\bibitem[KV67]{KV67}
Victor G. Kac, Ernest B. Vinberg, \emph{Quasi-homogeneous cones}, Mathematical Notes of the Academy of Sciences of the USSR \textbf{1}(1967), 231--235.


\bibitem[Lab06]{Lab06}
Fran{\c{c}}ois Labourie, \emph{Anosov flows, surface groups and curves in projective space}, Inventiones mathematicae \textbf{165} (2006), no.~1, 51--114. 

\bibitem[Lab08]{Lab08}
Fran{\c{c}}ois Labourie, \emph{Cross ratios, Anosov representations and the energy functional on Teichm\"uller space}, Ann. Sci. \'Ec. Norm. Sup\'er. (4) \textbf{41} (2008), 437--469.


\bibitem[LM09]{LM09}
Fran{\c{c}}ois Labourie and Gregory McShane, \emph{Cross ratios and identities
  for higher {T}eichm\"uller-{T}hurston theory}, Duke Math. J. \textbf{149}
  (2009), no.~2, 279--345.

\bibitem[Mar10]{Mar10}
Ludovic Marquis, \emph{Espaces des modules des surfaces projectives proprement convexes de volume fini}, Geometry and Topology \textbf{14} (2010), no.~4, 2103--2149.


\bibitem[Mar12]{Mar12}
Ludovic Marquis, \emph{Surface projective proprement convexe de volume fini}, Annales de l'Institut Fourier, \textbf{62} (2012), no.~1, 325--392.


\bibitem[Mir07a]{Mir07a}
Maryam Mirzakhani, \emph{Simple geodesics and {W}eil-{P}etersson volumes of
  moduli spaces of bordered {R}iemann surfaces}, Invent. Math. \textbf{167}
  (2007), no.~1, 179--222.
  

\bibitem[PH92]{PH92}
Robert C Penner and John Harer, \emph{Combinatorics of train tracks}, Princeton University Press, 1992.


\bibitem[S20]{S20}
Zhe Sun, \emph{Volume of the moduli space of unmarked bounded positive convex $\mathbb{RP}^2$ structures}, preprint (2020), arXiv:2001.01295.



\bibitem[SWZ20]{SWZ20}
Zhe Sun, Anna Wienhard, and Tengren Zhang, \emph{Flows on the $\operatorname{PGL}(V)$-Hitchin component}, Geom. Funct. Anal. 30 (2020), no. 2, 588–692. 

\bibitem[SZ25]{SZ25}
Zhe Sun and Tengren Zhang, \emph{The Goldman symplectic form on the $\operatorname{PGL}(V)$-Hitchin component}, Mem. Amer. Math. Soc. 312 (2025), no. 1585. 


\bibitem[Thu79]{Thu79}
William P Thurston, \emph{The geometry and topology of three-manifolds}, Princeton University Princeton, NJ, 1979.

\bibitem[V13]{V13}
Constantin Vernicos, \emph{Asymptotic Volume in Hilbert Geometries}, Indiana Univ. Math. J., \textbf{62} (2013), No. 5, 1431--1441. 


\bibitem[W18]{W18}
Anna Wienhard, \emph{An invitation to higher Teichm\"uller theory}, In Proceedings of the International Congress of Mathematicians: Rio de Janeiro 2018, 1013--1039.

\bibitem[Wol82]{Wol82}
Scott Wolpert, \emph{The Fenchel--Nielsen deformation}, Annals of Mathematics \textbf{115} (1982), no.~3, 501--528.

\bibitem[Wol83]{Wol83}
Scott Wolpert, \emph{On the symplectic geometry of deformations of a hyperbolic surface}, Annals of Mathematics \textbf{117} (1983), no.~2, 207--234.

\bibitem[Wol21]{Wol21}
Scott Wolpert, \emph{The Hilbert area of inscribed triangles and quadrilaterals},  Geometriae Dedicata \textbf{214} (2021), 177--192.


\bibitem[Wri20]{Wri20}
Alex Wright, \emph{A tour through Mirzakhani’s work on moduli spaces of Riemann surfaces}, Bull. Amer. Math. Soc. \textbf{57} (2020), no. 3, 359--408.

\end{thebibliography}
\end{document}